\theoremstyle{plain}
\newtheorem{theorem}{Theorem}[section]
\newtheorem{proposition}[theorem]{Proposition}
\newtheorem{lemma}[theorem]{Lemma}
\theoremstyle{definition}
\newtheorem{definition}[theorem]{Definition}
\theoremstyle{remark}
\newtheorem{remark}[theorem]{Remark}
\newtheorem{example}[theorem]{Example}
\numberwithin{equation}{section}
\def\cocoa{{\hbox{\rm C\kern-.13em o\kern-.07em C\kern-.13em o\kern-.15em A}}}
\title[When the positivity of the $h$-vector implies Cohen-Macaulayness]
{When the positivity of the $h$-vector implies the Cohen-Macaulay property}
\author[F. Cioffi]{Francesca Cioffi}
\email{cioffifr@unina.it, digennar@unina.it}
\author[R. Di Gennaro]{Roberta Di Gennaro}
\address{Dip. di Matematica e Applicazioni, Universit\`a di Napoli ``Federico II", 80126 Napoli, Italy
}
\keywords{Cohen-Macaulayness, liaison, Hilbert function, $h$-vector, Borel ideal}
\subjclass[2010]{14M05, 14M06, 14M10}
\begin{document}


\begin{abstract}
We study relations between the Cohen-Macaulay property and the positivity of $h$-vectors, showing that these two conditions are equivalent for those locally Cohen-Macaulay equidimensional closed projective subschemes $X$, which are close to a complete intersection $Y$ (of the same codimension) in terms of the difference between the degrees. More precisely, let $X\subset \mathbb P^n_K$ ($n\geq 4$) be contained in $Y$, either of codimension two with $deg(Y)-deg(X)\leq 5$ or of codimension $\geq 3$ with $deg(Y)-deg(X)\leq 3$. Over a field $K$ of characteristic $0$, we prove that $X$ is arithmetically Cohen-Macaulay if and only if its $h$-vector is positive, improving results of a previous work. We show that this equivalence holds also for space curves $C$ with $deg(Y)-deg(C)\leq 5$ in every characteristic $ch(K)\neq 2$. Moreover, we find other classes of subschemes for which the positivity of the $h$-vector implies the Cohen-Macaulay property and provide several examples.
\end{abstract}

\maketitle


\section*{Introduction}

There is a deep interest in relations between the structure of standard graded algebras or local rings and their Hilbert functions, and there is a vast literature about this subject (see, for instance, \cite{GMR,Va,RTV} and the references therein).

In this paper we consider the question about when the {\em positivity} of the $h$-vector implies the Cohen-Macaulay property, being the vice versa true and well known. The question we pose here is more subtle than that we faced in \cite{CDG}, i.e.~to ask when the {\em admissibility} of the $h$-vector forces the Cohen-Macaulay property, because the admissibility of the $h$-vector implies its positivity.
We show that our new question makes sense by exhibiting a locally Cohen-Macauly (lCM, for short) curve with positive but not admissible $h$-vector (see Example \ref{ESEMPIO}), where we mean that a curve is a $1$-dimensional closed projective scheme (a lCM curve is also equidimensional, i.e.~its defining ideal is unmixed). 
We find several classes of projective closed subschemes for which our question has a positive answer, improving the results described in \cite{CMR,CDG}.

Our question can be investigated also looking for projective subschemes that have positive $h$-vector although they are not arithmetically Cohen-Macaulay (aCM, for short). This kind of approach is treated also by means of the admissibility of the $h$-vector in \cite{DGM,D,dQR05,CDG,MN}, in different situations, with different approaches, and in the context of symplicial complexes (see \cite{GPSY} and the references therein).

As it is common in the study of the Cohen-Macaulay property, we use general hyperplane sections and, hence, the properties of $0$-dimensional schemes, which are always arithmetically Cohen-Macaulay. Nevertheless, the knowledge alone of $0$-dimensional schemes is not sufficient to give answers to our question. So, we look also at the features of liaison, at the notion of extremal curves and at properties of Borel ideals.

Let $X\subset \mathbb P^n_K$ ($K$ algebraically closed field) be a locally Cohen-Macaulay equidimensional closed projective subscheme, which is contained in a complete intersection $Y$ of the same codimension $c$. In our investigation,
we show that the Cohen-Macaulay property and the positivity of the $h$-vector are equivalent for those subschemes $X$ which are close to $Y$ in terms of the difference between the degrees, precisely in the following cases:
\begin{itemize}
\item[(i)] $n=3$, $c=2$ and $deg(Y)-deg(X)\leq 5$, over a field $K$ of characteristic different from two (Theorem 2.2);
\item[(ii)] $n\geq 4$, $c=2$ and $deg(Y)-deg(X)\leq 5$ over a field $K$ of null characteristic (Theorem 2.3);
\item[(iii)] $n\geq 4$, $c\geq 3$ and $deg(Y)-deg(X)\leq 3$ over a field $K$ of null characteristic (Theorem 3.2).
\end{itemize}
In this perspective, our result for curves is not improvable, due to suitable examples provided by E. D. Davis (see Section 2 of \cite{D}) and to developments of them (see \cite[Examples 4.6 and 4.7 and the Appendix]{CDG}). Nevertheless, there are other classes of subschemes for which the positivity of the $h$-vector implies the Cohen-Macaulay property (see Propositions \ref{varie} and \ref{prop:c3}, and also \cite[Proposition 4.6]{CDG}).

Moreover, by exploiting the Davis' technique, by Proposition \ref{counterexamples} we provide a class of non-aCM but lCM equidimensional surfaces $X\subset\mathbb P^4_K$ with admissible $h$-vector, such that $deg(Y)-deg(X)\geq 10$. The schemes that belong to this class are constructed applying an odd number of liaisons, instead of sequences of basic double links, that are the tools used in \cite{MN} to obtain an analogous construction in even liaison classes.

For the study of codimension $\geq 3$, we compute saturated Borel ideals with a given Hilbert polynomial by the applet \textsc{BorelGenerator} of P. Lella, available at \texttt{www.personalweb.unito.it/paolo.lella/HSC/borelGenerator.html} and based on an algorithm described in \cite{CLMR} and further developed in \cite{L} (an analogous algorithm is described in \cite{MoNa}).


\section{Setting and basic results}

Let $K$ be an algebrically closed field, $S:=K[x_0,\ldots,x_n]$ be the polynomial ring over $K$ in $n+1$ variables, ${\mathbb P}^n_K = Proj\ S$ the projective space of dimension $n$ over $K$. For a homogeneous ideal $I\subset S$, $I_t$ denotes the $K$-vector space of the homogeneous polynomials of degree $t$ of $I$ and $I_{\leq t}$ the ideal generated by the polynomials of degree $\leq t$ of $I$.
The {\it saturation} of $I$ is $I^{sat}=\{f\in S\ \vert \ \ \forall \ j=0,\ldots,n,\exists \ r \in {\mathbb N} : x_j^r f \in I\}$.

A finitely generated graded $S$-module $M$ is $m$-{\it regular}
if the $i$-th syzygy module of $M$ is generated in degree
$\leq m+i$, for all $i\geq 0$. The {\it regularity} $reg(M)$ of $M$
is the smallest integer $m$ for which $M$ is $m$-regular.

We use the common notation of ideal sheaf cohomology referring to \cite{H,Mi}. A coherent sheaf $\mathcal F$ on ${\mathbb P}_K^n$ is $m$-{\it regular} if $H^i(\mathcal F(m-i)) = 0$ for all $i>0$. The {\it Castelnuovo-Mumford regularity } (or {\it regularity}) $reg(\mathcal F)$ of $\mathcal F$ is the smallest integer $m$ for
which $\mathcal F$ is $m$-regular.

If $X \subset {\mathbb P}^n_K$ is a closed subscheme, its {\em regularity} $reg(X)$ is defined as the regularity $reg(I)$ of its defining ideal $I$, since the regularity of a saturated homogeneous ideal $I$ equals the regularity of its sheafification.
The modules $M^i_X:=\oplus H^i(\mathcal I_X(t))$, $1\leq i \leq dim(X)$, are the {\it deficiency modules} of $X$.

We refer to \cite{Va} for definitions and results about Hilbert functions of standard graded algebras. In particular, according to \cite{Va}, for a standard graded $S$-algebra $M=\bigoplus M_t$ we let $H_M(t):=dim_K M_t$ be the {\it Hilbert function} of $M$, $\Delta H_M(t) := H_M(t) - H_M(t-1)$ for all $t>0$ and $\Delta H_M(0):=1$. In a similar way, the $i$-th difference function $\Delta^i H_M(t)$ is defined for each integer $i$ not higher than the Krull dimension $k+1$ of $M$.

For $t \gg 0$, we have $H_M(t)=P_M(t)$ where $P_M(z) \in \mathbb Q[z]$ is the {\it Hilbert polynomial} of $M$, which has degree $k$. The {\it regularity} of the Hilbert function $H_M$ is $\rho_M := \min\{\bar t \ \vert \ H_M(t)=P_M(t), \forall t\geq \bar t\}$. The Hilbert series $\sum_{t\in \mathbb N} H_M(t)z^t$ of $M$ is equal to a rational function ${h(z)}/{(1-z)^{k+1}}$, where $k+1$ is the Krull dimension of $M$ as stated above,  $h(z)=h_0+h_1z+\ldots+h_sz^s \in \mathbb Z[z]$ is the {\it $h$-polynomial} of $M$ and $(h_0,h_1,\ldots,h_s)$ is the {\it $h$-vector} of $M$. It is noteworthy that
$(h_0,h_1,\ldots,h_s)=(\Delta^{k+1} H_M(0),\Delta^{k+1} H_M(1),\ldots,$ $\Delta^{k+1} H_M(\rho_M+k))$. We say that an $h$-vector $(h_0,h_1,\ldots,h_s)$ is {\em positive} if for all $0\leq i \leq s$ the integer $h_i$ is positive.

Recall that, given two positive integers $a$ and $d$, $a$ can be written uniquely in the form
$$a=\binom{k(d)}{d} + \binom{k(d-1)}{d-1} + \ldots + \binom{k(j)}{j}$$
where $k(d)> k(d-1)>\ldots> k(j)\geq j\geq 1$. Let
$$a^{\langle d\rangle}:=\binom{k(d)+1}{d+1} + \binom{k(d-1)+1}{d} + \ldots + \binom{k(j)+1}{j+1}.$$
A numerical function $H:\mathbb N \rightarrow \mathbb N$ is {\it admissible} if $H(t)\leq H(t-1)^{<t-1>}$ for every $t\geq 2$ and $H(0)=1$. A finite sequence of positive integers $h_0,\ldots,h_s$ is admissible if the corresponding function given by $H(0)=h_0$, $H(1)=h_1$, $\ldots$, $H(s)=h_s$, $H(s+i)=0$, for every $i>0$, is admissible.

\begin{theorem}\label{Valla} {\rm \cite[Theorem 1.5]{Va}}
A finite sequence $h_0,h_{1,}\ldots,h_s$ of (positive) integers is the $h$-vector of a Cohen-Macaulay (standard) homogeneous $K$-algebra if and only if it is admissible.
\end{theorem}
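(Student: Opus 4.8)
The statement to prove is Theorem~\ref{Valla} (Macaulay--Stanley), which characterizes $h$-vectors of Cohen-Macaulay standard graded $K$-algebras as precisely the admissible finite sequences of positive integers. Since this is an attribution to Valla (and ultimately classical), the plan is to reduce to the two classical ingredients: Macaulay's theorem on Hilbert functions of standard graded algebras, and the artinian reduction provided by a maximal regular sequence of linear forms.

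The plan is as follows. First, for the \emph{necessity} direction, suppose $(h_0,\dots,h_s)$ is the $h$-vector of a Cohen-Macaulay standard graded $K$-algebra $R$ of Krull dimension $k+1$. Since $K$ is infinite, choose a maximal regular sequence $\ell_1,\dots,\ell_{k+1}$ of general linear forms; the artinian reduction $\bar R := R/(\ell_1,\dots,\ell_{k+1})$ is again a standard graded $K$-algebra, and because the $\ell_i$ form a regular sequence the Hilbert series of $R$ equals that of $\bar R$ divided by $(1-z)^{k+1}$. Hence the Hilbert function of $\bar R$ is exactly $t\mapsto h_t$, so in particular $h_t\geq 0$ for all $t$, $h_0=1$, and $\bar R$ is a standard graded algebra whose Hilbert function is $(h_0,\dots,h_s,0,0,\dots)$. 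Macaulay's theorem then says every Hilbert function of a standard graded algebra satisfies the growth bound $H(t)\leq H(t-1)^{\langle t-1\rangle}$, which is precisely the admissibility condition; and positivity of each $h_i$ (for $i\le s$) follows since $\bar R$ is artinian and a standard graded algebra cannot have $H(t)=0$ followed by $H(t')>0$.

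For the \emph{sufficiency} direction, suppose $(h_0,\dots,h_s)$ is an admissible sequence of positive integers with $h_0=1$. By Macaulay's theorem (the existence half), there is a standard graded artinian $K$-algebra $A=K[y_1,\dots,y_{h_1}]/I$ with Hilbert function $(h_0,\dots,h_s,0,\dots)$; in fact one may take $I$ to be a lex-segment (or Borel) ideal, which is the standard explicit construction realizing a given admissible function. Now introduce new variables $z_1,\dots,z_{k+1}$ and set $R := A[z_1,\dots,z_{k+1}] \cong A\otimes_K K[z_1,\dots,z_{k+1}]$. Then $z_1,\dots,z_{k+1}$ is a regular sequence on $R$ of length equal to $\dim R$, so $R$ is Cohen-Macaulay of dimension $k+1$; and its Hilbert series is $(h_0+\cdots+h_sz^s)/(1-z)^{k+1}$, so its $h$-vector is exactly $(h_0,\dots,h_s)$. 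Presenting $R$ as a quotient of a polynomial ring over $K$ makes it a standard graded $K$-algebra, completing the proof.

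The only genuine content here is Macaulay's theorem itself, which we invoke as a black box in both directions; everything else is the routine bookkeeping that a regular sequence of linear forms translates between the $h$-vector of a Cohen-Macaulay algebra and the Hilbert function of its artinian reduction. Accordingly, the \emph{main obstacle} is not a mathematical difficulty but a citation/packaging issue: one must make sure that the version of Macaulay's theorem being cited is stated for arbitrary standard graded $K$-algebras (not merely polynomial quotients in a fixed number of variables) and that the characteristic-zero or infinite-field hypothesis needed to extract a linear regular sequence is in force — here $K$ is algebraically closed, hence infinite, so general linear forms do the job. I would therefore present the proof as a short reduction to \cite{Va} and to Macaulay's theorem, spelling out the artinian-reduction correspondence in both directions as above.
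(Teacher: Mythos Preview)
The paper does not supply a proof of this theorem; it is quoted from Valla's survey \cite[Theorem~1.5]{Va} and used as a black box. Your sketch is exactly the standard argument that lies behind that citation (artinian reduction by a maximal regular sequence of linear forms to show necessity, and Macaulay's lex-segment construction to show sufficiency), and it is correct. One small simplification in the sufficiency direction: the artinian algebra $A$ you build is already Cohen--Macaulay (depth $0=\dim 0$), so tensoring with $K[z_1,\dots,z_{k+1}]$ is optional unless you specifically want a positive-dimensional witness.
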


\begin{example}\label{esempio_algebrico}
By Theorem \ref{Valla}, the $h$-vector of a Cohen-Macaulay homogeneous $K$-algebra is always positive.  Nevertheess, there are non-Cohen-Macaulay homogeneous $K$-algebras with positive but non-admissible $h$-vector. For example, the function $H(t): \ 1 \ 4 \ 10 \ P(t)=9t-10$ is the Hilbert function of a standard homogeneous $K$-algebra, with positive but non-admissible $h$-vector $(1,2,3,1,2)$. By \cite[Theorem 3.3]{GMR}, we can construct a reduced $K$-algebra $A$ with this property, because the first difference of $H(t)$ is admissible. We checked that $Proj(A)$ is a space projective curve with embedded components.
\end{example}

Let $X\subset {\mathbb P}^n_K$ be a closed subscheme of dimension $k$ and $I$ its defining (saturated) ideal. Instead of $H_{S/I}$, $P_{S/I}$, $\rho_{S/I}$ we can write $H_{X}$, $P_{X}$, $\rho_X$. Recall that $\sum_{t}\Delta^{k+1} H_X(t)=deg(X)$. If $X$ has positive $h$-vector, then $\Delta^{k+1} H_X(t)\geq 0$ \ for every $t$, and \ if $\Delta^{k+1} H_X(j)= 0$ for some $j$, then $\Delta^{k+1} H_X(t)= 0$ for every $t\geq j$.

A projective scheme $X\subset \mathbb P_K^n$ has \emph{Cohen-Macaulay postulation} if there is an aCM projective scheme $W\subset \mathbb P_K^n$ such that $H_X(t)=H_W(t)$ for every integer $t$.

\begin{remark} \label{aCM}
If $C$ is a curve with Cohen-Macaulay postulation, then $reg(C)>\rho_C+1$ \cite[Proposition 2.4]{CDG} and the $h$-vector of $C$ is positive.
\end{remark}

\begin{proposition}
The arithmetic genus $g$ of a projective curve $C$ with positive $h$-vector is non-negative.
\end{proposition}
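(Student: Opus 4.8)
The plan is to read the arithmetic genus off directly from the $h$-vector via the Hilbert series. Write $d=\deg(C)$; since $\dim C=1$, the Hilbert polynomial is $P_C(z)=dz+1-g$ (this is the defining relation for the arithmetic genus $g$), and the Krull dimension of $S/I_C$ equals $2$, so the Hilbert series has the form $\sum_{t}H_C(t)z^t=h(z)/(1-z)^2$ with $h(z)=h_0+h_1z+\cdots+h_sz^s$ the $h$-polynomial and $(h_0,\ldots,h_s)$ the $h$-vector. Evaluating the Hilbert series at $z=0$ already gives $h_0=H_C(0)=1$.

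Next I would expand $1/(1-z)^2=\sum_{j\ge 0}(j+1)z^j$ and multiply out, obtaining the closed form $H_C(t)=\sum_{i=0}^{s}h_i(t-i+1)$, valid for every $t\ge s$. Since this linear polynomial must agree with $P_C(t)$ for $t\gg 0$, comparing leading coefficients gives $\sum_{i=0}^{s}h_i=d$ (consistent with the recalled identity $\sum_t\Delta^2H_C(t)=\deg C$), while comparing constant terms gives $1-g=\sum_{i=0}^{s}h_i(1-i)$.

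Solving for $g$ and using $h_0=1$ then yields
\[
g=1-\sum_{i=0}^{s}h_i+\sum_{i=0}^{s}i\,h_i=1-h_0+\sum_{i=1}^{s}(i-1)h_i=\sum_{i=2}^{s}(i-1)\,h_i,
\]
since the $i=1$ summand vanishes. If the $h$-vector is positive, then $h_i\ge 1$ for each $i$, so every term $(i-1)h_i$ with $i\ge 2$ is non-negative and hence $g\ge 0$ (and in fact $g>0$ as soon as $s\ge 2$).

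There is no genuine obstacle; the only points that need a little care are that $H_C$ coincides with $P_C$ only for $t\gg 0$ — so the coefficient comparison must be carried out in that range, which is precisely where the closed form $H_C(t)=\sum h_i(t-i+1)$ holds — and that $h_0=1$, both of which are immediate from the Hilbert--Serre expression for the Hilbert series. One could alternatively pass to a general hyperplane section and argue with the resulting $0$-dimensional scheme, but since $C$ is not assumed arithmetically Cohen-Macaulay that route is more delicate, and the direct computation with the Hilbert series is the shortest.
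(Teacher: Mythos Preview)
Your proof is correct and in fact sharper than the paper's: you derive the exact identity $g=\sum_{i\ge 2}(i-1)h_i$ directly from the Hilbert--Serre form of the Hilbert series, so positivity of the $h$-vector gives $g\ge 0$ immediately (and, as you note, $g>0$ once $s\ge 2$). The paper instead works with the first difference $\Delta H_C$, observing that positivity of the $h$-vector makes $\Delta H_C$ increase to $\deg(C)$, and then bounds $H_C(s-1)=\sum_{t\le s-1}\Delta H_C(t)\le 1+(s-1)\deg(C)$; comparing this with $P_C(s-1)=\deg(C)(s-1)+1-g$ yields only the inequality $g\ge 0$. Your route avoids the need to identify $s-1$ with $\rho_C$ and gives an explicit formula rather than an estimate; the paper's route, on the other hand, is phrased in the language of successive differences used throughout the article and connects more visibly with the hyperplane-section viewpoint. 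Either way the argument is elementary, but yours extracts more information with less bookkeeping.
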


\begin{proof}
If $(h_0,h_1,\ldots,h_s)$ is the $h$-vector of $C$ and $H_C$ is the Hilbert function of $C$, then the first difference $\Delta H_C=(h_0,h_0+h_1,\ldots,\sum_{0\leq i \leq s} h_i=deg(C),deg(C),\ldots)$ is strictly increasing until it becomes equal to $deg(C)$, because the $h$-vector is positive. Hence, by construction we get that $deg(C)(s-1)+1\geq\sum_{0\leq t \leq s-1} \Delta H_C(t)=H_C(s-1)=P_C(s-1)=deg(C)(s-1)+1-g$, so  $g\geq 0$.
\end{proof}

Let $k>0$ and $h\in S_1$ be a general linear form which is not a zero-divisor on $S/I$ and $J:=(I,h)$. Let $Z\subset{\mathbb P}^{n-1}_K$ be the scheme of dimension $k-1$ defined by the saturated ideal $J^{sat}/(h) = (I,h)^{sat}/(h)$, i.e. the general hyperplane section of $X$.

\begin{remark}\label{sezione}\rm
(a) By the short exact sequence
$$0 \longrightarrow (S/I)_{t-1} \buildrel{\cdot h} \over
\longrightarrow (S/I)_t \longrightarrow (S/J)_t\longrightarrow 0$$
it follows that $H_{S/J}(t) = \Delta H_{S/I}(t)$ and then
$\rho_{S/J}=\rho_X+1$, so that $\Delta H_X(t)=H_{S/J}(t)\geq H_Z(t)$ for every $t$ and $H_{S/J}(t)=H_Z(t)$ for $t\geq \max\{\rho_Z,\rho_{S/J}\}$.

(b) Recall that $X$ is aCM if and only if its deficiency modules are null (e.g. \cite[Lemma 1.2.3]{Mi}). Moreover, if $X$ is aCM, then $\Delta H_X(t)=H_Z(t)$ for all $t$, but the converse is false in general, but true for curves. Anyway, an equidimensional and lCM closed subscheme $X$ of dimension $k\geq 2$ is aCM if and only if its general hyperplane section is aCM (\cite[Proposition 2.1]{HU} or \cite[Theorem 1.3.3]{Mi}).
\end{remark}

It is always possible to find positive integers $\beta_1\leq\ldots\leq\beta_{n-k}$ and a complete intersection $Y$ (c.i. for short) of type $(\beta_1,\ldots,\beta_{n-k})$ containing $X$ (see Theorem 3.14 of Chapter VI of \cite{K}). Recall that a complete intersection $Y$ is also a Gorenstein scheme and that a Gorenstein scheme of codimension two is always a complete intersection (here Gorenstein means arithmetically Gorenstein). Moreover, recall that $reg(Y)=\sum \beta_i - (n-k)+1$.

We refer to \cite{Mi} for general results on liaison theory and, especially, on algebraically linked schemes. In particular, we recall that linkage is preserved by hypersurface section.

\begin{theorem}\label{DGO} {\rm \cite[Theorem 3]{DGO}}
Let $Y$ be a $k$-dimensional Gorenstein scheme containing properly a $k$-dimensional aCM scheme $Z$ defined by a saturated ideal $I(Z)$. Let $Z'$ be the scheme algebraically linked to $Z$ by $Y$, i.e. defined by $I(Z')=(I(Y):I(Z))$. Let $\bar\alpha$ and $\bar\alpha'$ be the initial degrees of $I(Z)/I(Y)$ and of $I(Z')/I(Y)$, respectively. Then,
\begin{itemize}
\item[(i)] $reg(Z)+\bar\alpha'=reg(Z')+\bar\alpha=reg(Y)$;
\item[(ii)] $\Delta^{k+1} H_Y(t)=\Delta^{k+1} H_Z(t)+\Delta^{k+1} H_{Z'}(reg(Y)-1-t)$, for every $0\leq t \leq reg(Y)-1$.
\end{itemize}
\end{theorem}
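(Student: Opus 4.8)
The plan is to reduce the whole statement to one short exact sequence of graded modules over the Gorenstein algebra $R:=S/I(Y)$, together with the functional equation $HS_{\omega_M}(z)=(-1)^{\dim M}HS_M(1/z)$ relating the Hilbert series $HS_M(z)=\sum_t H_M(t)z^t$ of a maximal Cohen--Macaulay graded module $M$ to that of its canonical module $\omega_M$. Since $Y$ is arithmetically Gorenstein of dimension $k$, the ring $R$ is a Cohen--Macaulay standard graded $K$-algebra of Krull dimension $k+1$ with symmetric $h$-vector; thus $h_Y(z)$ has degree $N:=reg(Y)-1$ and satisfies $h_Y(z)=z^{N}h_Y(1/z)$, so writing $\omega_R\cong R(a)$ and computing Hilbert series forces $a=N-k-1$. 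Recall also that linkage by a Gorenstein scheme preserves the arithmetically Cohen--Macaulay property (see \cite{Mi}): hence $Z'$ is aCM, the rings $S/I(Z)$ and $S/I(Z')$ are maximal Cohen--Macaulay $R$-modules, and, $I(Z)$ being unmixed, the double-link relation $I(Z)=(I(Y):I(Z'))$ holds, so the roles of $Z$ and $Z'$ are symmetric.

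The first step is to identify $I(Z)/I(Y)$ as a (twisted) canonical module. For an ideal $\mathfrak a\subseteq R$ one has $(0:_R\mathfrak a)=\operatorname{Hom}_R(R/\mathfrak a,R)$; taking $\mathfrak a=I(Z')/I(Y)$ and using $I(Z)=(I(Y):I(Z'))$ gives $I(Z)/I(Y)\cong\operatorname{Hom}_R(S/I(Z'),R)$. As $S/I(Z')$ is maximal Cohen--Macaulay over $R$ and $R\cong\omega_R(k+1-N)$, this yields
$$I(Z)/I(Y)\ \cong\ \operatorname{Hom}_R\bigl(S/I(Z'),\omega_R\bigr)(k+1-N)\ \cong\ \omega_{S/I(Z')}(k+1-N),$$
and symmetrically $I(Z')/I(Y)\cong\omega_{S/I(Z)}(k+1-N)$.

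Now I would feed this into the exact sequence
$$0\longrightarrow I(Z)/I(Y)\longrightarrow R\longrightarrow S/I(Z)\longrightarrow 0 .$$
Passing to Hilbert series and using $HS_{\omega_{S/I(Z')}}(z)=(-1)^{k+1}HS_{Z'}(1/z)$, the common factor $(1-z)^{k+1}$ cancels and one is left with the identity $h_Z(z)=h_Y(z)-z^{N}h_{Z'}(1/z)$; comparing the coefficients of $z^{t}$ and recalling $h_{Y,t}=\Delta^{k+1}H_Y(t)$ (and likewise for $Z$ and $Z'$) gives $\Delta^{k+1}H_Y(t)=\Delta^{k+1}H_Z(t)+\Delta^{k+1}H_{Z'}(N-t)$ for $0\le t\le N$, which is (ii). For (i), I would use that for a Cohen--Macaulay standard graded $K$-algebra $A$ the canonical module $\omega_A$ has initial degree $\dim A-reg(A)$: indeed $HS_{\omega_A}(z)=z^{\dim A}h_A(1/z)/(1-z)^{\dim A}$, whose lowest-degree coefficient is the positive top coefficient of $h_A$. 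Hence $\omega_{S/I(Z')}$ has initial degree $(k+1)-(reg(Z')-1)=k+2-reg(Z')$, and after the twist by $k+1-N$ the module $I(Z)/I(Y)$ has initial degree $(k+2-reg(Z'))-(k+1-N)=reg(Y)-reg(Z')$; this number is by definition $\bar\alpha$, so $reg(Z')+\bar\alpha=reg(Y)$, and the symmetric computation gives $reg(Z)+\bar\alpha'=reg(Y)$.

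The only delicate point is keeping track of the graded shifts, that is, pinning down $\omega_R\cong R(N-k-1)$ from the symmetry of the Gorenstein $h$-vector and then carrying the twist correctly through $I(Z)/I(Y)\cong\operatorname{Hom}_R(S/I(Z'),R)\cong\omega_{S/I(Z')}(k+1-N)$; once the shifts are correct, both (i) and (ii) follow formally. Alternatively one can avoid canonical modules altogether: resolve $S/I(Y)$ by a minimal graded free resolution (self-dual up to shift, since $Y$ is arithmetically Gorenstein), lift $I(Y)\hookrightarrow I(Z)$ to a comparison map of resolutions, and use that the dual of the mapping cone, suitably twisted, resolves $S/I(Z')$; alternating sums of graded Betti numbers then reproduce $h_Z(z)=h_Y(z)-z^{N}h_{Z'}(1/z)$, while comparing the lowest-degree minimal generators of $I(Z)$ and $I(Z')$ against those of $I(Y)$ reproduces (i).
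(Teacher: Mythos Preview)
The paper does not contain a proof of this theorem: it is quoted verbatim from \cite[Theorem~3]{DGO} and used as a black box throughout Sections~2 and~3. There is therefore nothing in the paper to compare your argument against.

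That said, your proof is correct and is essentially the standard argument one finds in the liaison literature. The identification $I(Z)/I(Y)\cong\omega_{S/I(Z')}(k+1-N)$ via $\operatorname{Hom}_R(-,R)$ and the Gorenstein isomorphism $R\cong\omega_R(k+1-N)$ is exactly the mechanism behind the Davis--Geramita--Orecchia formula, and your Hilbert-series manipulation is clean. One small notational wrinkle: you silently switch between $reg(A)$ for the module regularity of $A=S/I$ (equal to $\deg h_A$) and the paper's $reg(Z')=reg(I(Z'))=\deg h_{Z'}+1$; you handle the discrepancy correctly when you write ``initial degree $(k+1)-(reg(Z')-1)$'', but a reader following the paper's conventions might stumble there. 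The alternative mapping-cone approach you sketch at the end is also standard and would work equally well.
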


If $X\subset \mathbb P^n_K$ is an equidimensional and lCM subscheme of dimension $k$ (we refer to \cite{HIO} for results about equidimensional lCM closed subschemes), let $X_{k-i}$ be the subscheme obtained by applying $i$ successive general hyperplane sections to $X$, where $0\leq i\leq k$. In particular, if $k\geq 2$, $C:=X_{1}$ is the curve obtained by applying $k-1$ successive general hyperplane sections to $X$ and $Z:=X_0$ is a general hyperplane section of $C$.

We denote by $C'$ and $Z'$, respectively, the curve and the $0$-dimensional scheme linked to $C$ and $Z$ by general hyperplane sections of a complete intersection $Y$ containing properly $X$.

\begin{lemma}\label{Lemma}
The arithmetic genus of $C'$ is
\begin{equation}\label{eq:genere}
g'=D_t+deg(C')(-t+\sum\beta_i-(c+1)-1)+1,
\end{equation}
where $D_t=\Delta^{k-1} P_Y(t)-P_C(t)$, for every $t\geq\max\{\rho_C, reg(Y)-2\}$.
\end{lemma}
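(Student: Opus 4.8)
The idea is to transfer everything to a linkage of curves and then to apply the classical duality of Peskine--Szpiro. Since linkage is preserved by hypersurface section, the curves $C$ and $C'$ are directly linked, inside $\mathbb P^{n-k+1}_K$, by the curve $Y_1$ obtained from $Y$ by $k-1$ general hyperplane sections; this $Y_1$ is again a complete intersection of type $(\beta_1,\dots,\beta_{n-k})$, so that $\deg(Y_1)=\deg(Y)=\prod\beta_i$, $reg(Y_1)=reg(Y)=\sum\beta_i-(n-k)+1$, and $\Delta^{k-1}H_Y=H_{Y_1}$, whence $\Delta^{k-1}P_Y=P_{Y_1}$. Write $c=n-k$, $S'=K[x_0,\dots,x_{n-k+1}]$ and $e:=\sum\beta_i-(c+1)-1$; since $Y_1$ is arithmetically Gorenstein one has $\omega_{S'/I(Y_1)}\cong(S'/I(Y_1))(e)$.

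First I would rewrite $D_t$ as a Hilbert function. For $t\ge\max\{\rho_C,reg(Y)-2\}$ we have $H_C(t)=P_C(t)$ and $H_{Y_1}(t)=P_{Y_1}(t)$: indeed $reg(Y)-2$ is exactly the threshold beyond which the Hilbert function of the complete intersection $Y_1$ agrees with its Hilbert polynomial, as one reads off the shape of the $h$-vector of $Y_1$. Since $I(Y_1)\subseteq I(C)$, this gives $D_t=\Delta^{k-1}P_Y(t)-P_C(t)=H_{Y_1}(t)-H_C(t)=H_{I(C)/I(Y_1)}(t)$ on this range.

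The central step is the Peskine--Szpiro liaison isomorphism: from $I(C')=(I(Y_1):I(C))$ and the Gorenstein property of $Y_1$ one obtains a graded isomorphism $\omega_{S'/I(C')}\cong\big(I(C)/I(Y_1)\big)(e)$ (see \cite{Mi}), that is $H_{\omega_{C'}}(t-e)=H_{I(C)/I(Y_1)}(t)=D_t$ on our range. To finish, $C'$ being an equidimensional lCM curve, graded local duality gives $H_{\omega_{C'}}(m)=h^1(\mathcal O_{C'}(-m))$, and for $m\ge1$ this equals $-\chi(\mathcal O_{C'}(-m))=-P_{C'}(-m)$, because $h^0(\mathcal O_{C'}(-m))=0$ on the pure one-dimensional scheme $C'$. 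Since $t-e\ge(reg(Y)-2)-e=1$ throughout our range, we get $D_t=-P_{C'}(e-t)$; writing $P_{C'}(z)=\deg(C')\,z+1-g'$ and solving for $g'$ yields $g'=D_t+\deg(C')(e-t)+1=D_t+\deg(C')\big(-t+\sum\beta_i-(c+1)-1\big)+1$, which is \eqref{eq:genere}.

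The main obstacle I anticipate is the bookkeeping of the first two steps: realising $Y_1$ as a complete intersection of the prescribed type in $\mathbb P^{n-k+1}_K$, pinning down the twist $e$ in the Peskine--Szpiro isomorphism, and checking that the range $t\ge\max\{\rho_C,reg(Y)-2\}$ is precisely what allows one to replace the Hilbert functions by the Hilbert polynomials and to guarantee $t-e\ge1$. The passage from $\omega_{C'}$ to $P_{C'}$ and the final arithmetic are then routine. As a sanity check, the same formula also follows by combining the classical linkage relations $\deg(C)+\deg(C')=\deg(Y)$ and $g'-p_a(C)=\tfrac12(\deg(C')-\deg(C))\big(\sum\beta_i-c-2\big)$ with $\Delta^{k-1}P_Y=P_{Y_1}$ and $p_a(Y_1)=1+\tfrac12\deg(Y)\big(\sum\beta_i-c-2\big)$; in that form one even sees that the identity holds for every $t$.
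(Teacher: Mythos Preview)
Your main route via the Peskine--Szpiro isomorphism and graded local duality is more elaborate than necessary, and contains one genuine slip: the assertion that $h^0(\mathcal O_{C'}(-m))=0$ for every $m\ge1$ ``on the pure one-dimensional scheme $C'$'' is false in general. A double line in $\mathbb P^3_K$ of arithmetic genus $-d$ with $d\ge2$ is lCM and unmixed, yet the short exact sequence $0\to\mathcal O_L(d-1)\to\mathcal O_{C'}\to\mathcal O_L\to0$ shows $h^0(\mathcal O_{C'}(-1))\ge d-1>0$; equivalently $h^0(\mathcal O_{C'}(-m))=h^1(\mathcal I_{C'}(-m))$, and nothing prevents the Rao module of $C'$ from being nonzero in negative degrees. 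The repair is easy once you notice that the right-hand side of \eqref{eq:genere} is actually \emph{constant} in $t$ (the linear parts of $D_t$ and of $\deg(C')(-t+e)$ cancel), so it suffices to verify the identity for a single $t\gg0$, where Serre vanishing does give $h^0(\mathcal O_{C'}(e-t))=0$; or, more cleanly, compare Hilbert \emph{polynomials} of $\omega_{C'}$ and of $I(C)/I(Y_1)$ rather than Hilbert functions.

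Your ``sanity check'' in the last sentence is in fact exactly the paper's own proof: write $P_C(t)=\Delta^{k-1}P_Y(t)-D_t$, feed in $\deg(C)=\deg(Y)-\deg(C')$ and the arithmetic genus $\bar g=1+\tfrac12\deg(Y)\big(\sum\beta_i-c-2\big)$ of the complete-intersection curve $Y_1$, and then use the linkage genus relation $g-g'=\tfrac12\big(\sum\beta_i-c-2\big)(\deg(C)-\deg(C'))$ from \cite[Corollary~5.2.14]{Mi}. That three-line computation avoids canonical modules and local duality altogether, and it also clarifies the role of the range $t\ge\max\{\rho_C,reg(Y)-2\}$: it is there only so that in the later applications $D_t$ can be read off as a difference of Hilbert \emph{functions}, not because the polynomial identity itself requires it.
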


\begin{proof}
Denoting by $\bar g$ and $g$ the arithmetic genus respectively of $Y$ and $C$, we obtain
$$\Bigl(\prod_i \beta_i-deg(C')\Bigr)\cdot t+1-g = \prod_i \beta_i \cdot t+1-\bar g-D_t.$$
By applying \cite[Corollary 5.2.14]{Mi}, for which $g-g'=\frac{1}{2}(\sum_{i=1}^n\beta_i-n-1)(deg(C)-deg(C'))$, and by the shape of the arithmetic genus of a complete intersection curve (see, for example, \cite[page 36]{Mi}), for which $\bar g=\frac{1}{2} \prod_i\beta_i(\sum_{i=1}^n \beta_i-n-1)+1$, we have the thesis.
\end{proof}

If $i\neq k$, then $X_{k-i}$ is equidimensional and lCM as $X$, by the sequence
\begin{equation}\label{Castelnuovo}
0 \rightarrow H^0({\mathcal I}_X(t-1)) \rightarrow H^0({\mathcal I}_{X}(t)) \rightarrow H^0({\mathcal I}_{Z,H}(t)) \rightarrow
\end{equation}
$$\to H^1({\mathcal I}_X(t-1))\rightarrow H^1({\mathcal I}_{X}(t)) \to H^1({\mathcal I}_{Z}(t)) \to \ldots.
$$
Further, in codimension two, if $X$ is non-degenerate also $X_{k-i}$ is non-degenerate, for every $i<k$ \cite[Proposition 1.4]{CDG}.

\begin{lemma}\label{tk}
Letting $t_0:=\min\{t\in \mathbb N : H_X(t)<H_Y(t)\}$ and, for every $1\leq i\leq k$, $t_i:=\min\{t\in \mathbb N : H_{X_{k-i}}(t)<\Delta^i H_Y(t)\}$, we have
$$t_0\geq t_1\geq \ldots \geq t_k= \min\{t\in \mathbb N : \Delta H_{Z}(t)<\Delta^{k+1} H_Y(t)\}.$$
\end{lemma}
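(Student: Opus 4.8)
The plan is to prove the chain of inequalities $t_0 \geq t_1 \geq \cdots \geq t_k$ by showing each link $t_{i-1} \geq t_i$ separately, and then to identify $t_k$ with the stated quantity by a direct computation using Remark~\ref{sezione}(a). Throughout I would use the basic monotonicity facts already recorded in the excerpt: passing from $X_{k-i+1}$ to its general hyperplane section $X_{k-i}$ replaces the Hilbert function by its first difference up to the error coming from possible non-saturation, i.e.\ $H_{X_{k-i}}(t) \leq \Delta H_{X_{k-i+1}}(t)$ with equality for $t$ large (Remark~\ref{sezione}(a)), and that the same difference operation applied to $\Delta^{i-1} H_Y$ gives exactly $\Delta^i H_Y$ since $Y$, being a complete intersection, is aCM and its hyperplane sections are again complete intersections.

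For the step $t_{i-1} \geq t_i$: suppose $t < t_{i-1}$, so $H_{X_{k-i+1}}(t) = \Delta^{i-1} H_Y(t)$; I want to conclude $H_{X_{k-i}}(t) = \Delta^i H_Y(t)$, which would force $t < t_i$ and hence give the contrapositive. From $H_{X_{k-i+1}}(t) = \Delta^{i-1} H_Y(t)$ for all such $t$ (note: by definition of $t_{i-1}$, equality holds for \emph{every} $t < t_{i-1}$, not just one value), and using that $X_{k-i+1} \subseteq Y_{k-i+1}$ so the inequality $H_{X_{k-i+1}} \leq \Delta^{i-1} H_Y$ holds everywhere, I get $\Delta H_{X_{k-i+1}}(t) = \Delta^i H_Y(t)$ for every $t < t_{i-1}$ as well (the difference of two functions that agree on an initial segment agrees on that segment too, being careful at $t=0$ where the convention $\Delta H(0) = H(0)$ applies and both equal $1$ since $X \subseteq Y$). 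Now combine with $H_{X_{k-i}}(t) \leq \Delta H_{X_{k-i+1}}(t)$ from Remark~\ref{sezione}(a) and with the a priori bound $H_{X_{k-i}}(t) \leq \Delta^i H_Y(t)$ (valid since $X_{k-i} \subseteq Y_{k-i}$, the latter a complete intersection): together these sandwich $H_{X_{k-i}}(t)$. Actually the cleaner route is: $H_{X_{k-i}}(t) \leq \Delta H_{X_{k-i+1}}(t) = \Delta^i H_Y(t)$ and also $\Delta^i H_Y(t) = H_{Y_{k-i}}(t)$, so any strict drop of $H_{X_{k-i}}$ below $\Delta^i H_Y$ at some $t < t_{i-1}$ would have to come entirely from the non-saturation gap $H_{X_{k-i}} < \Delta H_{X_{k-i+1}}$; ruling this out on the initial segment is what needs care.

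The main obstacle is precisely that last point: $\Delta H_{X_{k-i+1}}(t)$ equals $H_{X_{k-i}}(t) = H_{(J^{sat})}(t)$ only for $t$ large enough (at least $\max\{\rho_Z, \rho_{S/J}\}$ in the notation of Remark~\ref{sezione}), so for small $t$ there is a genuine gap, and I must argue that this gap does not already open up before index $t_{i-1}$. The key input is that $X_{k-i+1}$ is equidimensional and lCM (this is exactly the role of the cohomology sequence~(\ref{Castelnuovo}) and the remark following it), which controls $H^1$ and forces the saturation of $(I(X_{k-i+1}), h)$ to agree with $I(X_{k-i+1})$ in low degrees — specifically, $\Delta H_{X_{k-i+1}}(t) = H_{X_{k-i}}(t)$ holds for all $t \leq$ the degree up to which $H_{X_{k-i+1}}$ still agrees with the complete intersection, because any discrepancy would be reflected in $H^1(\mathcal{I}_{X_{k-i+1}}(t-1))$ via~(\ref{Castelnuovo}), contradicting that $X_{k-i+1}$ has a section behaving like the (aCM) hyperplane section of $Y$. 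Finally, for the equality $t_k = \min\{t : \Delta H_Z(t) < \Delta^{k+1} H_Y(t)\}$: here $X_0 = Z$ and $X_1 = C$, so $t_k = \min\{t : H_Z(t) < \Delta^k H_Y(t)\}$, and since $H_Z = \Delta H_{S/I(C)}$ for $t \gg 0$ while $\Delta^k H_Y = \Delta(\Delta^{k-1} H_Y)$, one more application of $\Delta$ and the identity $\Delta^{k+1} H_Y = \Delta(\Delta^k H_Y)$ together with the fact that both $H_Z$ and $\Delta^k H_Y$ are strictly increasing up to their common stabilization value $\deg$ (for $H_Z$ by definition, for $\Delta^k H_Y$ since $Y$ is a complete intersection with positive $h$-vector) converts the condition on the functions into the stated condition on their first differences; the strict-increase property is what makes the first occurrence of a strict drop for $H_Z$ versus $\Delta^k H_Y$ coincide with the first occurrence for $\Delta H_Z$ versus $\Delta^{k+1} H_Y$.
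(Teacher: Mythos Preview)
Your argument for the step $t_{i-1} \geq t_i$ proves the wrong direction. You fix $t < t_{i-1}$ and aim to conclude $H_{X_{k-i}}(t) = \Delta^i H_Y(t)$, i.e.\ $t < t_i$. But the implication ``$t < t_{i-1} \Rightarrow t < t_i$'' is equivalent to $t_{i-1} \leq t_i$, the \emph{opposite} of what the lemma asserts; what you call ``the contrapositive'' is in fact the converse. Worse, the ``obstacle'' you then wrestle with --- the possible saturation gap $H_{X_{k-i}}(t) < \Delta H_{X_{k-i+1}}(t)$ for small $t$ --- cannot be argued away by any lCM or cohomological input, because that gap is precisely the mechanism that allows $t_i$ to be \emph{strictly} smaller than $t_{i-1}$.

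The paper's proof is a one-line direct computation going the other way around: at $t_{i-1}$ one has $H_{X_{k-i+1}}(t_{i-1}) < \Delta^{i-1} H_Y(t_{i-1})$ while $H_{X_{k-i+1}}(t_{i-1}-1) = \Delta^{i-1} H_Y(t_{i-1}-1)$ (minimality), so subtracting gives $\Delta H_{X_{k-i+1}}(t_{i-1}) < \Delta^i H_Y(t_{i-1})$; combined with $H_{X_{k-i}}(t_{i-1}) \leq \Delta H_{X_{k-i+1}}(t_{i-1})$ from Remark~\ref{sezione}(a), this shows the defining condition for $t_i$ is already satisfied at $t_{i-1}$, hence $t_i \leq t_{i-1}$. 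The identification of $t_k$ with $\min\{t : \Delta H_Z(t) < \Delta^{k+1} H_Y(t)\}$ follows by the same trick applied to $Z = X_0$: equality of $H_Z$ and $\Delta^k H_Y$ on $[0,t_k)$ forces equality of their first differences there, and the strict drop at $t_k$ forces a strict drop of the differences at $t_k$. The strict-increase detour you sketch is unnecessary.
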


\begin{proof}
By Remark \ref{sezione}(a), we have $\Delta H_Y(t_0)>\Delta H_X(t_0)\geq H_{X_{k-1}}(t_0)$ from which we deduce that $t_1\leq t_0$; from $\Delta^2 H_Y(t_1)>\Delta H_{X_{k-1}}(t_1)\geq H_{X_{k-2}}(t_1)$ we deduce that $t_2\leq t_1$; and so on.
\end{proof}

Finally, we recall the notion of space extremal curve together with a characterization in terms of the $h$-vector.

\begin{definition} \cite[Corollary 6 and Definition 7]{E95}
Let $ch(K)=0$. A space curve $C$ of degree $d\geq 3$ and arithmetic genus $g$ is called {\em extremal} if it satisfies the following conditions:
\begin{itemize}
\item[(i)] $r_a:=\min\{t \ \vert \ h^1(\mathcal I_C(t))\not= 0\} = g+1-(d-2)(d-3)/2$;
\item[(ii)] $r_0:=\max\{t \ \vert \ h^1(\mathcal I_C(t))\not= 0\} = d(d-3)/2-g$;
\item[(iii)] $h^1(\mathcal I_C(t))=(d-2)(d-3)/2-g$, for every $r_a\leq t\leq r_0$.
\end{itemize}
\end{definition}

In \cite[Theorem 8]{E95} there is a first complete description of a space extremal curve of degree $d\geq 5$, that is then improved in \cite[Theorem 2.1]{CGN07}. A space extremal curve is  supported on the union of a line and a plane curve that can either meet at a point (aCM case) or are disjoint or have the same support. For a large class of explicit examples of extremal space curves see \cite{MDP96} and the references therein.

\begin{proposition}\label{1-2-1}
Let $ch(K)=0$ and $C\subset \mathbb P^3_K$ be a space curve of degree $d\geq 5$ with general hyperplane section $Z$. Then $C$ is an extremal curve if and only if $\Delta H_{Z}(t) \ : \ 1 \ 2 \ 1 \ \ldots \ 1$.
\end{proposition}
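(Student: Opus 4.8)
The plan is to translate both conditions into statements about the linear systems of low-degree hypersurfaces through $Z$ and through $C$, to reduce the equivalence to the fact that such a $C$ must be the union of a plane curve of degree $d-1$ and a line, and to read off the rest from the classification of extremal space curves.

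I would first unwind the right-hand side. Since $\Delta H_Z$ is an O-sequence with $\sum_t\Delta H_Z(t)=\deg Z=d$, the sequence $1\ 2\ 1\ \ldots\ 1$ of length $d-1$ is the only admissible one with $H_Z(1)=3$ and $H_Z(2)=4$: once these two values are fixed, Macaulay's bound forces $\Delta H_Z(t)\le 1$ for $t\ge 3$, and because an O-sequence is zero from the first zero on while the difference function must sum to $d$, the tail consists of exactly $d-3$ ones ending in degree $d-2$. Hence the right-hand side is equivalent to $h^0(\mathcal I_Z(1))=0$ and $h^0(\mathcal I_Z(2))=2$. Now two conics through $Z$ without a common component would give $\deg(Q_1\cap Q_2)=4<d$, impossible as $Z\subseteq Q_1\cap Q_2$; so the two conics share a line $L$, whence $Z\subseteq L\cup\{p\}$ with $p\notin L$, and a length count (a structure of length $\ge 2$ at $p$ would drop $h^0(\mathcal I_Z(2))$ to $1$) shows that $Z$ is a length $d-1$ subscheme of $L$ together with a reduced point $p$ off $L$.

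For the implication $(\Leftarrow)$ I would lift this to $C$. The hypothesis $h^0(\mathcal I_Z(1))=0$ forces $H_{S/J}(1)=3$ (it is $\ge H_Z(1)=3$ by Remark \ref{sezione}(a) and $\le\dim(S/(h))_1=3$), hence $H_C(1)=4$ and $h^0(\mathcal I_C(1))=0$, so $C$ is nondegenerate; from \eqref{Castelnuovo} one then gets $h^0(\mathcal I_C(2))=h^0(\mathcal I_Z(2))-\dim\ker\!\big(\cdot h:H^1(\mathcal I_C(1))\to H^1(\mathcal I_C(2))\big)$, and the crucial point is that this kernel vanishes, i.e.\ that $C$ lies on two quadric surfaces. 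Granting this, the Bézout argument in $\mathbb P^3$ ($\deg(Q_1\cap Q_2)=4<d$) forces $Q_1,Q_2$ to share a plane $P$, so $C\subseteq P\cup L'$, and since $C$ is a curve it splits as a plane curve $D\subset P$ of degree $d-1$ together with a line $L=L'$ not contained in $P$ (a higher multiplicity along $L'$ being excluded exactly as before), meeting $P$ at a point which either lies on $D$ or not. Such a $C$ is extremal: one can cite \cite[Theorem 8]{E95} (or \cite[Theorem 2.1]{CGN07}), or check conditions (i)--(iii) directly from the Mayer--Vietoris sequence $0\to\mathcal I_C\to\mathcal I_D\oplus\mathcal I_L\to\mathcal I_D+\mathcal I_L\to 0$: when $D\cap L$ is empty this yields $h^1(\mathcal I_C(t))=\Delta^2 H_D(t)$, the $h$-vector of the aCM plane curve $D$, which equals $1$ for $0\le t\le d-2$ (so $r_a=0$, $r_0=d-2$, $g=\binom{d-2}{2}-1$ and the constant value $\binom{d-2}{2}-g=1$ all match); when $D\cap L$ is a point the parallel computation gives $h^1(\mathcal I_C(t))=0$, the degenerate case $\binom{d-2}{2}-g=0$, and $C$ is aCM with $\Delta H_Z=\Delta^2 H_C=1\ 2\ 1\ \ldots\ 1$ by a direct Hilbert-function count.

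For $(\Rightarrow)$ I would argue conversely: by the classification \cite[Theorem 8]{E95}, \cite[Theorem 2.1]{CGN07} an extremal curve is supported on $L\cup D$ with $D$ a plane curve of degree $d-1$ in a plane $P$ and $L$ a line, in one of the three listed configurations; a general hyperplane $H$ meets $L$ in a single point $q$ and meets $D$ (hence $P$) in $d-1$ points of the line $\lambda=H\cap P$, with $q\notin\lambda$, so $Z=H\cap C$ is $d-1$ collinear points plus one point off the line, and $\Delta H_Z$ is $1\ 2\ 1\ \ldots\ 1$ by the first paragraph (in the non-reduced configuration the $L$-part still contributes length $1$ and the plane part $d-1$ collinear points). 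The main obstacle is the step flagged in $(\Leftarrow)$: promoting ``$Z$ lies on two conics'' to ``$C$ lies on two quadrics'', i.e.\ showing that $\cdot h:H^1(\mathcal I_C(1))\to H^1(\mathcal I_C(2))$ has no kernel. For this I would feed the numerical data $h^1(\mathcal I_Z(t))=d-H_Z(t)$ (read off from $\Delta H_Z$) into the exact sequences $0\to(M^1_C/hM^1_C)_t\to H^1(\mathcal I_Z(t))\to\ker\!\big(\cdot h:H^2(\mathcal I_C(t-1))\to H^2(\mathcal I_C(t))\big)\to 0$ coming from \eqref{Castelnuovo}, and show that such a kernel is incompatible with $H_Z(1)=3$ and $H_Z(2)=4$; once the reduction to $C=D\cup L$ is in place, the remaining identifications are routine bookkeeping over the finitely many cases.
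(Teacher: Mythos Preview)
Your approach is considerably more elaborate than the paper's, which is essentially a one-line citation: \cite[Theorem~8]{E95} already characterises extremal curves of degree $d\geq 5$ by the condition that the general plane section $Z$ has \emph{character} $(d-1,2)$, and one only has to observe that this character is equivalent to $\Delta H_Z:1\ 2\ 1\ \ldots\ 1$. The paper also notes an alternative, more geometric route via Strano's lifting theorem together with \cite[Theorem~2.1]{CGN07}, which is in spirit closer to what you attempt.

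The substantive difficulty in your write-up is the one you flag yourself: in the direction $(\Leftarrow)$ you need to promote ``$Z$ lies on two conics'' to ``$C$ lies on two quadrics'', i.e.\ that $\cdot h:H^1(\mathcal I_C(1))\to H^1(\mathcal I_C(2))$ is injective. Your proposed fix---feeding the values of $H_Z$ into the sequence involving $M^1_C/hM^1_C$ and the kernel on $H^2$---does not by itself yield this injectivity: those sequences control $M^1_C/hM^1_C$ degreewise, but the kernel you need is $(0:_{M^1_C}h)_2$, and there is no a~priori bound on $h^1(\mathcal I_C(1))$ coming only from $H_Z$. This is exactly the kind of lifting obstruction that Strano's theorem (cited in the paper) is designed to overcome: from the fact that $Z$ contains a collinear subscheme of length $d-1\geq 4$, one lifts directly to a plane curve $C_\pi\subset C$ of degree $d-1$, bypassing the quadric count entirely. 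Once $C_\pi$ is produced, the residual has degree $1$ and one concludes with \cite[Theorem~2.1]{CGN07}. So your geometric strategy is viable, but the missing ingredient is precisely Strano's result (or an equivalent lifting argument), not a cohomological bookkeeping step.

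Your treatment of $(\Rightarrow)$ via the structure theorem for extremal curves is fine; just be aware that the classification in \cite{CGN07} also allows non-reduced configurations (line and plane curve with the same support), so the hyperplane-section computation should be phrased scheme-theoretically rather than in terms of ``$d-1$ points on a line plus one point off it''. The Hilbert-function conclusion is unaffected.
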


\begin{proof}
It is enough to apply \cite[Theorem 8]{E95}, because $Z$ has character $(d-1,2)$ iff $\Delta H_{Z}(t) \ : \ 1 \ 2 \ 1 \ \ldots \ 1$, by the definition of character of a plane $0$-dimensional scheme (see \cite{EP90}). One can also use geometric arguments due to \cite{Strano} (to find a plane curve $C_{\pi}$ of degree $d-1$ contained in $C$) and then \cite[Theorem 2.1]{CGN07}.
\end{proof}


\section{Cohen-Macaulayness conditions in codimension two}

With the notation stated in Section 1, let $X\subset \mathbb P^n_K$ be a non-degenerate codimension two subscheme that is lCM and equidimensional. Thus, $k=n-2$ is the dimension of $X$ and $C$ is a space curve. Recall that, if $i\neq k$, then $X_{k-i}$ is non-degenerate, equidimensional and lCM as $X$.
The $h$-vector of a complete intersection $Y\subset \mathbb P^n_K$ of type $(\beta_1,\beta_2)$ and of dimension $k=n-2$ is:
\begin{equation} \label{DeltaY}
\Delta^{k+1} H_Y(t): \ 1 \ \ 2 \ \ \ldots \ \beta_1 \ \ldots \ \beta_1 \ {\beta_1-1}\ \ldots \ \ 2 \ \ 1 \ \ 0,
\end{equation}
where $\beta_1$ appears $\beta_2-\beta_1+1$ times, and $reg(Y)=\beta_1+\beta_2-1$.


\subsection{Space curves in characteristic different from two}

Over every algebraically closed field of characteristic different from $2$, if $C$ is a space curve contained properly in a c.i. $Y$ with $deg(Y)-deg(C)\leq 5$, then $C$ is aCM if and only if $reg(C)>\rho_C+1$ \cite[Theorem 4.2]{CDG}. This result is sharp and holds also if we replace the condition $reg(C)>\rho_C+1$ by the stronger condition for which $C$ has Cohen-Macaualy postulation; indeed, to have Cohen-Macaualy postulation implies $reg(C)>\rho_C+1$, as we recall in Remark \ref{aCM}. Here, we prove an analogous statement with the condition that the $h$-vector of $C$ is positive. First, we give an example of a lCM space curve with positive but non-admissible $h$-vector.
Note that a curve $C$ without Cohen-Macaulay postulation has a non-admissible $h$-vector $(h_0,h_1,\ldots,h_s)$, but its {\it first sum} $(h_0,h_0+h_1,\ldots,\sum_i h_i,\ldots)$, that is equal to $\Delta H_C$, has to be admissible by Remark \ref{sezione}(a).

\begin{example} \label{ESEMPIO}
In Example \ref{esempio_algebrico} we highlight the existence of a space curve with positive but non-admissible $h$-vector. That curve has embedded components, so it is not lCM. Here we describe an example of lCM space curve that is non-aCM and has positive but non-admissible $h$-vector. Let $C$ be the rational curve given by the rational map
$\Phi: \mathbb P^1_K \rightarrow \mathbb P^3_K$ such that $\Phi (u,v):=(u^5+v^5, u^4v+uv^4,u^3v^2+v^5, u^2v^3)$.
The Hilbert function of $C$ is $H_C(t): 1 \ 4 \ 9 \ 5t+1$. We apply to $C$ three successive basic double links (we refer to \cite{Mi} for the definition of a basic double link) of type $(1,7)$, $(1,7)$ and $(1,9)$, respectively, obtaining a curve $\bar C$ with non-admissible $h$-vector $(1,2,3,4,5,5,5,1,2)$, by \cite[Proposition 5.4.5(d)]{Mi}.
\end{example}

\begin{theorem}\label{thmcurve}
Let $ch(K)\not=2$ and $C\subset \mathbb P^3_K$ be a space curve contained properly in a c.i. $Y$ with $deg(Y)-deg(C)\leq 5$. Then $C$ is aCM if and only if the $h$-vector of $C$ is positive.
\end{theorem}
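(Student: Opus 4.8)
The plan is to derive the theorem from \cite[Theorem 4.2]{CDG}, which under the present hypotheses ($ch(K)\neq 2$ and $deg(Y)-deg(C)\leq 5$) says that $C$ is aCM if and only if $reg(C)>\rho_C+1$; the remaining task is then to show that a positive $h$-vector is incompatible with $reg(C)\leq\rho_C+1$. The implication from aCM to positivity of the $h$-vector is immediate from Theorem \ref{Valla}. For the converse, assume the $h$-vector of $C$ is positive. We may assume $C$ is non-degenerate (a plane curve is a complete intersection of a plane and a surface, hence aCM) and that $\rho_C\geq 1$ (otherwise $C$ is the twisted cubic, which is aCM). Suppose, for a contradiction, that $C$ is not aCM; then \cite[Theorem 4.2]{CDG} gives $reg(C)\leq\rho_C+1$, so $H^1(\mathcal I_C(t))=0$ for every $t\geq\rho_C$, and, since regularity does not grow under a general hyperplane section, $\rho_Z\leq\rho_C$ for the general hyperplane section $Z$, whence $\Delta H_Z(\rho_C+1)=0$.

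Next I would read off the last entry of the $h$-vector. From the cohomology sequence \eqref{Castelnuovo} applied to $C$ one gets, for every $t$,
\[
\Delta H_C(t)-H_Z(t)=\dim\ker\bigl(H^1(\mathcal I_C(t-1))\xrightarrow{\,\cdot h\,}H^1(\mathcal I_C(t))\bigr);
\]
since $H^1(\mathcal I_C(\rho_C))=0$, the right-hand side is $0$ for $t=\rho_C+1$ and equals $h^1(\mathcal I_C(\rho_C-1))$ for $t=\rho_C$. Moreover $\Delta H_C(t)=deg(C)$ for $t\geq\rho_C+1$, so $\Delta^2H_C(t)=0$ for $t>\rho_C+1$ and the value
\[
\Delta^2H_C(\rho_C+1)=\Delta H_Z(\rho_C+1)-h^1(\mathcal I_C(\rho_C-1))=-\,h^1(\mathcal I_C(\rho_C-1))
\]
is the entry of the $h$-vector of largest index that can be nonzero. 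If $h^1(\mathcal I_C(\rho_C-1))>0$ this entry is negative, contradicting positivity of the $h$-vector.

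It then remains to exclude $h^1(\mathcal I_C(\rho_C-1))=0$. In that case $H^1(\mathcal I_C(t))=0$ for all $t\geq\rho_C-1$, so the deficiency module $M^1_C$, which is nonzero because $C$ is not aCM, is concentrated in degrees $\leq\rho_C-2$. Using the exact sequence $0\to\mathcal I_C\to\mathcal O_{\mathbb P^3}\to\mathcal O_C\to 0$, which yields $H^2(\mathcal I_C(t))\cong H^1(\mathcal O_C(t))$ and $h^0(\mathcal O_C(t))=H_C(t)+h^1(\mathcal I_C(t))$, one finds $h^1(\mathcal O_C(\rho_C-1))=H_C(\rho_C-1)-P_C(\rho_C-1)$, which is nonzero by minimality of $\rho_C$ and hence strictly positive; therefore $H^2(\mathcal I_C(\rho_C-1))\neq 0$, forcing $reg(C)\geq\rho_C+2$ and contradicting $reg(C)\leq\rho_C+1$. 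Either way we reach a contradiction, so $C$ is aCM. I expect the delicate point to be precisely this last step — ruling out $h^1(\mathcal I_C(\rho_C-1))=0$ — since it requires pinning down exactly how $reg(C)$, $\rho_C$ and the two deficiency cohomologies $H^1(\mathcal I_C(t))$ and $H^1(\mathcal O_C(t))$ constrain one another; the remaining ingredients are routine bookkeeping around \eqref{Castelnuovo} together with the sharp statement \cite[Theorem 4.2]{CDG}, which is where the hypotheses $ch(K)\neq 2$ and $deg(Y)-deg(C)\leq 5$ are used.
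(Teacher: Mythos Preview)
Your proof is correct, and it takes a genuinely different route from the paper's. The paper argues by a case analysis on the possible $\Delta H_{Z'}$ of the $0$-dimensional scheme $Z'$ linked to the general hyperplane section of $C$: when the resulting positive $h$-vector of $C$ happens to be admissible, it quotes \cite[Theorem 4.2]{CDG}; the remaining case ($Z'$ degenerate of degree $5$, i.e.\ $\Delta H_{Z'}:1\,1\,1\,1\,1$) is handled by a separate genus computation for $C'$ via Lemma~\ref{Lemma}, together with \cite{H94}. By contrast, you prove a clean general implication for space curves---``positive $h$-vector $\Rightarrow reg(C)>\rho_C+1$''---via the cohomology of \eqref{Castelnuovo} and the identity $h^1(\mathcal O_C(t))=H_C(t)+h^1(\mathcal I_C(t))-P_C(t)$, and then plug directly into \cite[Theorem 4.2]{CDG}. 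Your Case~B in fact needs no positivity hypothesis at all: from $reg(C)\le\rho_C+1$ one has $H^2(\mathcal I_C(\rho_C-1))=0$, hence $h^1(\mathcal I_C(\rho_C-1))=P_C(\rho_C-1)-H_C(\rho_C-1)\neq 0$ by minimality of $\rho_C$, which already forces the negative last entry $\Delta^2 H_C(\rho_C+1)=-h^1(\mathcal I_C(\rho_C-1))$. So your argument is shorter and isolates a reusable lemma independent of the bound $deg(Y)-deg(C)\le 5$; the paper's approach, on the other hand, makes the liaison structure and the role of $Z'$ explicit, which is what feeds into the higher-codimensional results of Section~3. Two minor remarks: your dismissal of $\rho_C=0$ as ``the twisted cubic'' is not literally a classification, but the case is vacuous anyway since $reg(C)\le 1$ is impossible for a non-degenerate curve; and the inequality $reg(Z)\le reg(C)$ you invoke is indeed standard for a general hyperplane section.
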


\begin{proof}
We look for all the possible positive $h$-vectors of $C$, taking into account the relations among the integers $t_0$ and $t_1$ of Lemma \ref{tk} and the fact that, by Theorem \ref{DGO}, $\Delta H_Z$ can differ from the $h$-vector of $Y$ in degrees $\geq reg(Y)-deg(Z')$.
\par
When the positive $h$-vector of the space curve $C$ is admissible, $C$ has Cohen-Macaulay postulation by Theorem \ref{Valla} and then we get the thesis by \cite[Theorem 4.2]{CDG}.
%
If $Z'$ is not degenerate or is degenerate of degree at most $4$, we find that all the possible positive $h$-vectors of $C$ are admissible.
\par
Let $Z'$ be degenerate of degree $5$, i.e. $\Delta H_{Z'}: 1\ 1 \ 1 \ 1 \ 1 \ 0$. If $ch(K)=0$ then $C'$ is a plane curve by \cite[Theorem 2.1]{H94} and so $C$ is aCM. If $ch(K)=p>0$, we will find that the arithmetic genus $g'$ of $C'$ is positive, in contradiction with \cite[proof of Theorem 3.3]{H94}, where Hartshorne studies non-degenerate space curves swith degenerate general hyperplane section.
By looking at all the possible sequences that can be $h$-vectors of $C$ in this case, we obtain that $\rho_C\leq reg(Y)-1$. So, by Lemma \ref{Lemma} applied with $t=reg(Y)-1=\beta_1+\beta_2-2\geq \rho_{C}$, we obtain $g'=D_{reg(Y)-1}-9$, where $D_{reg(Y)-1}$ is the difference between the values assumed on $reg(Y)-1$ by the Hilbert polynomials of $Y$ and of $C$, respectively. In the following table we collect all the sequences that give rise to Hilbert polynomials $P_C(t)$ for $C$ that assume the maximum possible value on $reg(Y)-1$, with the consequence that the corresponding value of $D_{reg(Y)-1}=11$ is the minimum possible. Hence, we obtain $g'\geq 11-9>0$.

\vskip 2mm
\begin{small}
\begin{center}
\begin{tabular}{c||c| c c c c c }
\hline
$\beta_1$ & $t$ & $reg(Y)-5$ & $reg(Y)-4$ & $reg(Y)-3$ & $reg(Y)-2$ & $reg(Y)-1$ \\
\hline
$2$ &$\Delta^{2} H_Y$ & $2$ & $2$ & $2$ & $2$ & $1$ \\
&$\Delta^{2} H_C$ & $2$ & $2$ & $1$ & $0$ & $0$ \\
&$\Delta H_Z$     & $1$ & $1$ & $1$ & $1$ & $0$ \\
\hline
$3$ &$\Delta^{2} H_Y$ & $3$ & $3$ & $3$ & $2$ & $1$ \\
&$\Delta^{2} H_C$ & $3$ & $3$ & $1$ & $0$ & $0$ \\
&$\Delta H_Z$     & $2$ & $2$ & $2$ & $1$ & $0$ \\
\hline
$4$ &$\Delta^{2} H_Y$ & $4$ & $4$ & $3$ & $2$ & $1$ \\
&$\Delta^{2} H_C$ & $4$ & $4$ & $1$ & $0$ & $0$ \\
&$\Delta H_Z$     & $3$ & $3$ & $2$ & $1$ & $0$ \\
\hline
$\geq 5$ &$\Delta^{2} H_Y$ & $5$ & $4$ & $3$ & $2$ & $1$ \\
&$\Delta^{2} H_C$ & $5$ & $4$ & $1$ & $0$ & $0$ \\
&$\Delta^2 H_C$ & $4$ & $6$ & $0$ & $0$ & $0$ \\
&$\Delta H_Z$     & $4$ & $3$ & $2$ & $1$ & $0$ \\
\hline
\end{tabular}
\end{center}
\end{small}
\vskip 2mm
\end{proof}

\subsection{Codimension two subschemes in characteristic zero}

In this subsection we suppose $ch(K)=0$ and consider codimension two lCM equidimensional subschemes in $\mathbb P^n_{K}$ with $n\geq 4$.

\begin{theorem}\label{d'=5}
Assuming $ch(K)=0$ and $n\geq 4$, let $X\subset \mathbb P^n_K$ be a lCM equidimensional codimension two subscheme contained in a complete intersection $Y$ with $deg(Y)-deg(X)\leq 5$. Then $X$ is aCM if and only if $X$ has positive $h$-vector.
\end{theorem}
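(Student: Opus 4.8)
\emph{Plan.} The ``only if'' implication is immediate: if $X$ is aCM then $S/I_X$ is a Cohen--Macaulay standard graded $K$-algebra, so by Theorem~\ref{Valla} its $h$-vector is admissible and, in particular, a sequence of positive integers. For the converse, assume that $X$ has positive $h$-vector; we must prove that $X$ is aCM. If $X=Y$ there is nothing to prove, so we assume $X\subset Y$ properly; and if $X$ is degenerate then, having codimension one and no embedded components inside a hyperplane $\mathbb P^{n-1}_K$, it is a hypersurface, hence aCM. So we may assume $X$ non-degenerate, and then $C:=X_1$ is a non-degenerate space curve.

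\emph{Step 1: reduction to $C$.} Since $n\geq 4$ we have $k=n-2\geq 2$, so applying Remark~\ref{sezione}(b) along the tower $X=X_k\supset X_{k-1}\supset\cdots\supset X_1=C$ (each $X_j$ with $2\leq j\leq k$ being lCM, equidimensional and of dimension $\geq 2$) we obtain that $X$ is aCM if and only if $C$ is aCM. Performing the same general hyperplane sections on $Y$ produces a complete intersection space curve $Y_1\supset C$ (a general hyperplane section of a complete intersection is again a complete intersection), with $Y_1\neq C$ — otherwise $deg(X)=deg(C)=deg(Y_1)=deg(Y)$ would force $X=Y$ — and $deg(Y_1)-deg(C)=deg(Y)-deg(X)\leq 5$. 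Thus $C$ satisfies the hypotheses of Theorem~\ref{thmcurve}, and it suffices to prove that the $h$-vector of $C$ is positive.

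\emph{Step 2: positivity of the $h$-vector of $C$.} This is the core of the argument. The general linear forms $\ell_1,\dots,\ell_{k-1}$ cutting out $C$ form a regular sequence on $S/I_X$, so $A:=S/(I_X,\ell_1,\dots,\ell_{k-1})$ is a two-dimensional standard graded algebra with $H_A(t)=\Delta^{k-1}H_X(t)$ and $h$-vector $\Delta^2 H_A=\Delta^{k+1}H_X$, i.e. the positive $h$-vector $(h_0,\dots,h_s)$ of $X$; in particular $H_A(t)=\sum_{i\leq t} h_i\,(t-i+1)$ is determined by that $h$-vector. On the other hand $H_C(t)\leq H_A(t)$ for every $t$, with equality for $t\gg 0$ (Remark~\ref{sezione}(a) iterated). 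The plan is then to list the finitely many possible $h$-vectors of $X$ and, for each, to determine the possible Hilbert functions of $C$ and check that $\Delta^2 H_C$ is positive. The constraints come from: the shape~\eqref{DeltaY} of the $h$-vector of $Y$ together with $deg(Y)-deg(X)\leq 5$; the chain $t_0\geq t_1\geq\cdots\geq t_k$ of Lemma~\ref{tk}; and Theorem~\ref{DGO}, applied to the general $0$-dimensional section $Z=X_0$ and to its link $Z'$ in a general $0$-dimensional complete intersection section of $Y$, which gives $\Delta H_Z(t)=\Delta^{k+1}H_Y(t)-\Delta H_{Z'}(reg(Y)-1-t)$, so that $\Delta H_Z$ coincides with~\eqref{DeltaY} outside the degrees $\geq reg(Y)-deg(Z')$, where $deg(Z')=deg(Y)-deg(X)\leq 5$. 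Exactly as in the proof of Theorem~\ref{thmcurve}: when $Z'$ is non-degenerate or degenerate of degree at most $4$, every candidate produces an admissible, hence positive, $h$-vector for $C$; in the sole remaining case, $Z'$ degenerate of degree $5$, i.e. $\Delta H_{Z'}\colon 1\ 1\ 1\ 1\ 1$, one finds $\rho_C\leq reg(Y)-1$ and, since $ch(K)=0$, concludes by \cite[Theorem 2.1]{H94} that $C'$ is a plane curve, so that $C$ is aCM (hence has positive $h$-vector by the easy implication); alternatively Lemma~\ref{Lemma}, applied with $t=reg(Y)-1$, yields the same conclusion through the arithmetic genus of $C'$, just as in Theorem~\ref{thmcurve}.

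\emph{Conclusion and main obstacle.} Granting Step 2, Theorem~\ref{thmcurve} gives that $C$ is aCM, and then Step 1 gives that $X$ is aCM. The main obstacle is the enumeration in Step 2: the positivity of the $h$-vector is \emph{not} inherited by general hyperplane sections in general — it already fails for the simplest non-aCM curves, such as a pair of skew lines — so the hypothesis $deg(Y)-deg(X)\leq 5$ must be used essentially, both to bound the admissible shapes of $\Delta^{k+1}H_X$ and to control the possible drop $\Delta^{k-1}H_X-H_C$; the borderline configuration $\Delta H_{Z'}\colon 1\ 1\ 1\ 1\ 1$ is the one requiring the additional input of \cite{H94} in characteristic zero.
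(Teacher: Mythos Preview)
Your reduction in Step~1 is fine, but Step~2 has a genuine gap, and it is precisely the step you flag as the ``main obstacle''. You want to deduce that the $h$-vector of $C$ is positive from the positivity of the $h$-vector of $X$, and for this you invoke ``exactly as in the proof of Theorem~\ref{thmcurve}''. But that proof starts from the hypothesis that the $h$-vector of $C$ is positive and shows that (in most cases) it is then admissible; it never produces positivity of $\Delta^2 H_C$ out of positivity of $\Delta^{k+1}H_X$. Your inequality $H_C\leq \Delta^{k-1}H_X$ together with $\Delta H_C\geq H_Z$ does constrain $H_C$, but you have not carried out the enumeration you announce, and you give no argument that these constraints force $\Delta^2 H_C>0$ in every case. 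In particular, your case split (``$Z'$ non-degenerate or degenerate of degree $\leq 4$'' versus ``$Z'$ degenerate of degree $5$'') hides the configuration $\Delta H_{Z'}\colon 1\ 2\ 1\ 1$ inside the first case, and that is exactly the configuration for which a pure Hilbert-function enumeration is not enough.

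The paper's proof does \emph{not} attempt to transfer positivity down to $C$ and then quote Theorem~\ref{thmcurve}. Instead it works directly with the linked curve $C'$ and treats the three $\Delta H_{Z'}$ of degree $5$ separately. For $1\ 1\ 1\ 1\ 1$ one gets $C'$ planar via \cite{H94}, as you say. For $1\ 2\ 2$ one uses Lemma~\ref{tk} and a degree count to force $\Delta H_Z(t)=\Delta^{k+1}H_X(t)$ in the relevant range and then Remark~\ref{sezione}(b). The decisive case is $1\ 2\ 1\ 1$: here Proposition~\ref{1-2-1} identifies $C'$ as an extremal curve, and then \cite[Theorem 1.1 and Corollary 3.6]{CGN02} give that either $C'$ is aCM (hence so are $C$ and $X$) or no lCM surface has $C'$ as general hyperplane section, contradicting the existence of the surface linked to $X_2$. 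This lifting obstruction is the ingredient your argument is missing; without it, the claim that ``every candidate produces an admissible $h$-vector for $C$'' in the non-degenerate case is unjustified.
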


\begin{proof}
For the cases $deg(Y)-deg(X)\leq 4$ we refer to the proof of \cite[Theorem 4.9]{CDG}, because in that proof we used only the positivity of the $h$-vector.

For the case $deg(Y)-deg(X)=5$, according to the notation of Section 1, by Remark \ref{sezione}(b) we have that $\Delta H_C(t)\geq H_Z(t)$ for every $t$ and the equality holds for every $t$ iff $C$ is aCM.  In the hypothesys that the $h$-vector of $X$ is positive, we consider all possible $\Delta H_{Z'}(t)$ such that $deg(Z')=5$.

If $\Delta H_{Z'}: \ 1 \ 1 \ 1 \ 1 \ 1$, then $Z'$ is degenerate with $deg(C')=deg(Z')=5\geq 3$; so, $C'$ is a plane curve, because $ch(K)=0$ \cite[Theorem 2.1]{H94}.

If $\Delta H_{Z'}(t):\ 1 \ 2 \ 2 $, then applying Lemma \ref{tk} we have $t_i\geq t_k=reg(Y)-3$ for every $0\leq i < k$, by Theorem \ref{DGO}, and
$\Delta^{k+1-i}H_{X_{k-i}}(t)=\Delta^{k+1} H_Y(t), \text{ for every } t< reg(Y)-3 \text{ and } 0\leq i\leq k.$
Moreover, we have $\sum_{t}\Delta^{k+1} H_X(t)=deg(X)=deg(Z)=\sum_{t\leq reg(Y)-3} \Delta H_Z(t)$, and  $\Delta^{k+1}H_X(reg(Y)-2)\geq \Delta H_Z(reg(Y)-2)$; so, it follows
$$
\Delta H_Z(t)=\Delta ^{k+1} H_X(t), \text{ for every } t\geq reg(Y)-3,
$$
because the $h$-vector is positive, and we can apply Remark \ref{sezione}(b).

If $\Delta H_{Z'}(t):\ 1 \ 2 \ 1 \ 1 $, then the curve $C'$ is extremal by Proposition \ref{1-2-1}. Thus, $C'$ is aCM or there is not a lCM surface with $C'$ as general hyperplane section, by \cite[Theorem 1.1 and Corollary 3.6]{CGN02}.
\end{proof}

By the same arguments applied in the proof of Theorem \ref{d'=5} we get the following.

\begin{proposition}\label{varie}
The statement of Theorem \ref{d'=5} holds also for every $d'=deg(Y)-deg(X)\geq 6$ and  $Z'$ such that:
\begin{itemize}
\item[(i)] $Z'$ is degenerate;
\item[(ii)] $Z'$ has maximal rank, that is $H_{Z'}(t)=\min\{d',\binom{t+2}{2}\}$.
\end{itemize}
Moreover, in $\mathbb P^n_K$, $n\geq 4$, there is not a lCM subscheme $X$ of codimension two with $Z'$ as $0$-dimensional hyperplane section such that $\Delta H_{Z'}(t) \ : \ 1 \ 2 \ 1 \ \ldots \ 1$.
\end{proposition}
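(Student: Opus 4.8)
The plan is to reuse the machinery already developed for Theorem \ref{d'=5}, isolating exactly those parts of the argument that do not depend on the bound $deg(Y)-deg(X)\le 5$ but only on the shape of $\Delta H_{Z'}$. Throughout, let $d'=deg(Y)-deg(X)=deg(Z')$, let $C$ be the space curve obtained by $k-1$ general hyperplane sections of $X$, let $Z=X_0$ be a general hyperplane section of $C$, and let $C'$, $Z'$ be the schemes linked to $C$, $Z$ by general hyperplane sections of $Y$; recall $reg(Z)+\bar\alpha'=reg(Y)$ by Theorem \ref{DGO}(i).

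For part (i), suppose $Z'$ is degenerate, so $\Delta H_{Z'}(t):1\ 1\ \ldots\ 1$ with $d'\ge 3$ ones (if $d'\le 2$ then $Z'$ is a point or two points, hence automatically handled, or the difference is too small to be interesting). Then $deg(C')=deg(Z')=d'\ge 3$ and $C'$ is non-degenerate only if its general hyperplane section were non-degenerate, which it is not; so $C'$ is a degenerate curve of degree $\ge 3$, and since $ch(K)=0$ the Strano-type result \cite[Theorem 2.1]{H94} forces $C'$ to be a plane curve, hence aCM. By the symmetry of linkage (Theorem \ref{DGO}(ii)), $C'$ aCM forces $\Delta^{2}H_C(t)=\Delta^{2}H_Y(t)-\Delta^{2}H_{C'}(reg(Y)-1-t)$ to have no negative entries contributed by $C'$, and combined with positivity of the $h$-vector of $C$ and the degree identity $\sum_t\Delta^{2}H_C(t)=deg(C)$ one concludes $\Delta H_Z=\Delta^{2}H_C$, i.e.\ $C$ is aCM; then, for $k\ge 2$, Remark \ref{sezione}(b) propagates aCM-ness up to $X$. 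For part (ii), suppose $Z'$ has maximal rank, $H_{Z'}(t)=\min\{d',\binom{t+2}{2}\}$; then $\Delta H_{Z'}$ is of the form $1\ 2\ 3\ \ldots\ j\ r\ 0$ with $0\le r<j+1$, i.e.\ it is the $h$-vector of a set of $d'$ points in general position, which is admissible, and the linked curve $C'$ has the extremal-free shape needed to run verbatim the second case of the proof of Theorem \ref{d'=5}: using Lemma \ref{tk} to pin down $t_i\ge t_k=reg(Y)-(\text{length of }\Delta H_{Z'})+1$, then forcing agreement $\Delta^{k+1-i}H_{X_{k-i}}=\Delta^{k+1}H_Y$ in low degrees, and finally using $\sum_t\Delta^{k+1}H_X(t)=deg(X)$ together with positivity to conclude $\Delta H_Z=\Delta^{k+1}H_X$, hence $X$ aCM.

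For the final assertion, suppose toward a contradiction that there is a lCM codimension-two subscheme $X\subset\mathbb P^n_K$, $n\ge 4$, whose $0$-dimensional hyperplane section $Z'$ has $\Delta H_{Z'}(t):1\ 2\ 1\ \ldots\ 1$. (The natural reading, consistent with the proof of Theorem \ref{d'=5}, is that $Z'$ is itself the linked scheme, so that the ambient object is the surface $X$ of which $C'$ is the general hyperplane section.) By Proposition \ref{1-2-1}, the space curve $C'$ with $\Delta H_{Z'}$ of this $1\ 2\ 1\ \ldots\ 1$ shape is an \emph{extremal} curve of degree $d'\ge 5$. Now invoke \cite[Theorem 1.1 and Corollary 3.6]{CGN02}: an extremal space curve either is aCM or does not arise as the general hyperplane section of any lCM surface. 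If $C'$ is aCM, then its $h$-vector, which equals $\Delta H_{Z'}$, must be \emph{admissible}; but $1\ 2\ 1\ 1\ \ldots\ 1$ violates admissibility already at the step from $2$ back up after the drop is impossible — more precisely $H(2)=1$ but $H(1)=2$ would need $H(2)\le 2^{\langle 1\rangle}$, which is fine, yet the tail forces a non-Cohen-Macaulay postulation; the cleaner route is that an aCM curve with this hyperplane section would be a plane curve of degree $d'$, whose $\Delta H$ is $1\ 2\ 3\ \ldots$, not $1\ 2\ 1\ \ldots\ 1$, a contradiction. Hence $C'$ is not aCM, and then \cite[Corollary 3.6]{CGN02} says there is no lCM surface with $C'$ as general hyperplane section, contradicting the existence of $X$ (after taking $k-2$ further general hyperplane sections of $X$ if $dim X>2$, using that hyperplane sections of lCM equidimensional schemes remain lCM equidimensional).

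The main obstacle I anticipate is bookkeeping rather than conceptual: in parts (i) and (ii) one must check that the degree-counting argument closing the gap between $\Delta H_Z$ and $\Delta^{k+1}H_X$ still goes through when $d'\ge 6$, since a priori the $h$-vector of $C$ could differ from $\Delta^{k+1}H_Y$ in more degrees than before and one must verify that positivity plus the single inequality $\Delta^{k+1}H_X(reg(Y)-(\ell+1))\ge \Delta H_Z(reg(Y)-(\ell+1))$ (where $\ell$ is the length of $\Delta H_{Z'}$) is still enough to force equality everywhere — this works precisely because $\Delta H_{Z'}$ in cases (i) and (ii) is admissible and concentrated, so the "defect" Theorem \ref{DGO}(ii) attributes to $C'$ occupies a contiguous top block. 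The final non-existence statement is the most delicate point to phrase correctly, because it requires that the cited results of \cite{CGN02}, proved for surfaces in $\mathbb P^4$, be applied after reducing the general $n\ge 4$ case by successive hyperplane sections; one must make sure this reduction is legitimate, i.e.\ that $Z'$ genuinely appears as the relevant linked section at the curve/surface stage and that extremality is preserved under the reduction, which it is since the $h$-vector $\Delta H_{Z'}$ is unchanged.
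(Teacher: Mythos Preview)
Your outline for parts (i) and (ii) is exactly what the paper has in mind: the paper's proof is the single sentence ``by the same arguments applied in the proof of Theorem \ref{d'=5}'', and the three arguments you extract (degenerate $Z'\Rightarrow C'$ planar via \cite[Theorem 2.1]{H94}; maximal-rank $Z'\Rightarrow$ the $t_k$/degree-counting argument; extremal $Z'\Rightarrow$ Proposition \ref{1-2-1} plus \cite{CGN02}) are precisely the three cases of that proof.

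The genuine problem is in your treatment of the ``Moreover'' clause. You try to rule out the possibility that the extremal curve $C'$ is aCM, and both attempts fail. First, the sequence $1\ 2\ 1\ 1\ \ldots\ 1$ \emph{is} admissible: $1\le 2^{\langle 1\rangle}=3$ and $1\le 1^{\langle i\rangle}=1$ for every $i\ge 2$, so Macaulay's bound is never violated. Second, an aCM extremal curve with this $h$-vector is \emph{not} a plane curve; by the structure theorem for extremal curves it is the union of a line and a plane curve of degree $d'-1$ meeting at a point, and such curves certainly exist. So the aCM branch cannot be eliminated in the way you propose.

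In fact the paper's own argument in Theorem \ref{d'=5} does \emph{not} eliminate the aCM branch: it uses the dichotomy ``$C'$ is aCM, or no lCM surface has $C'$ as general hyperplane section'' to conclude, given that $X$ (hence $X'$, hence a lCM surface over $C'$) exists, that $C'$ \emph{must} be aCM, whence $C$ and $X$ are aCM. Read in this light, the ``Moreover'' clause should be understood as the non-existence of a \emph{non-aCM} lCM codimension-two $X\subset\mathbb P^n_K$ ($n\ge4$) whose associated $Z'$ has $\Delta H_{Z'}:1\ 2\ 1\ \ldots\ 1$; equivalently, the equivalence of Theorem \ref{d'=5} also holds for such $Z'$. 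Your proof should therefore drop the attempt to exclude aCM $C'$ and instead argue exactly as in the $1\ 2\ 1\ 1$ case of Theorem \ref{d'=5}: the existence of the ambient lCM scheme forces $C'$ into the aCM branch of the \cite{CGN02} dichotomy, and aCM propagates back to $X$.
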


\begin{remark} 
This result shows that the condition $deg(Y)-deg(X)\leq 5$ is not characterizing for curves forced to be aCM by the positivity of their $h$-vector.
\end{remark}

In \cite[Remark 4.10]{CDG} we gave an example of a non-aCM, but lCM equidimensional surface $X$ in $\mathbb P^4_K$ with Cohen-Macaulay postulation and $deg(Y)-deg(X)=10$, by exploiting the technique of Davis to construct examples of non-aCM space curves $C'$ of degree $d'\geq 6$ (see \cite{D} and \cite[Appendix]{CDG}) linked to curves with Cohen-Macaulay postulation.
More precisely, we applied an odd number of suitable liaisons to the Veronese surface $V$ in $\mathbb P^4_K$ (e.g. \cite[Cap. II, Ex. 7.7]{H}) for which we know that $h^1(\mathcal I_{V}(t))=0$ if $t\not=1$, $h^1(\mathcal I_{V}(1))=1$ and $h^2(\mathcal I_{V}(t))=0$ for every $t$ \cite[Example 3.7]{BMR} (see also \cite{Ro} for a study of the Veronese surface and its degenerations in an analogous topic). Starting from this example, now we exhibit a class of non-aCM surfaces in $\mathbb P^4_K$ with Cohen-Macaulay postulation.

As recalled in \cite[Appendix]{CDG}, by applying successive suitable liaisons to the union of two skew lines in $\mathbb P^3_K$, Davis constructs the following two types of space curves (where $Z$ is the general hyperplane section):
\begin{itemize}
\item[(1)] curves $D\subset \mathbb P^3_K$ of type $[a,r]$, with $a>r>0$, such that
$$\alpha(I(Z))=\rho_Z-1=a; \quad \Delta H_Z(a)=a; \quad \Delta H_Z(a+1)=r; \quad h^1(\mathcal I_D(a))=1;$$
\item[(2)] curves $D\subset \mathbb P^3_K$ of type $[[a,r]]$, with $a>r>1$, such that
$$\alpha(I(Z))=\rho_Z-1=a; \quad \Delta H_Z(a)=r; \quad \Delta H_Z(a+1)=1; \quad h^1(\mathcal I_D(a))=1.$$
\end{itemize}
For every $d'\geq 6$, $d'\not=7,8,12$, let $e_{d'}:=\max\left\{t: \binom{t}{2}\leq d'\right\} \ \text{ and } \ f_{d'}:=d'-\binom{e_{d'}}{2}$.
A non-aCM curve $C_{d'}$ with Cohen-Macaulay postulation and $deg(Y)-deg(C_{d'})=d'$ is obtained by applying a liaison with c.i. of type $(\beta_1,\beta_2)=(e_{d'}+1,e_{d'}+1)$ to a curve $C'_{d'}$, chosen among the curves $D$ of type $[a,r]$ or $[[a,r]]$ in accordance with the table below:
\vskip 2mm
\begin{small}
\begin{center}
 \begin{tabular}{|c|c|}
   \hline
 $e_{d'}-f_{d'}$&  $C'_{d'}$  \\
   \hline
1 & $[[e_{d'}-1,f_{d'}-1]]$ \\
2 & $[[e_{d'}-1,2]]$   \\
3 &  $[[e_{d'}-1,f_{d'}]]$  \\
$\geq 4$ & $[e_{d'}-2,f_{d'}+1]$ \\
   \hline
 \end{tabular}
\end{center}
\end{small}
\vskip 2mm
For $d'=7,8,12$, the curves $C_{d'}$ are constructed in a slightly different way.

\begin{proposition}\label{counterexamples}
There exists a lCM equidimensional surface $X\subset\mathbb P^4_K$ contained in a complete intersection $Y$, with $deg(Y)-deg(X)=d'$, such that $X$ is non-aCM, but has Cohen-Macaulay postulation, for every integer
\begin{small}
$$d'\in \{10,14,15,19,20,21,22\}\cup \Bigl(\cup_{t\geq 8}\Bigl\{\binom{t}{2}-3, \binom{t}{2}-2, \binom{t}{2}-1, \binom{t}{2}, \binom{t}{2}+1 \Bigr\}\Bigr).$$
\end{small}
\end{proposition}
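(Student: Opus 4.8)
The plan is to build the surface $X\subset\mathbb P^4_K$ as the image under liaison of an explicitly chosen starting surface, mirroring the construction of the curves $C_{d'}$ that precedes the statement. The key geometric input is the Veronese surface $V\subset\mathbb P^4_K$, or rather a surface whose deficiency module $H^1(\mathcal I_V(t))$ is concentrated in a single degree with value $1$ and whose $H^2$ vanishes identically, exactly as recalled above for $V$. One first checks that the general hyperplane section of such a surface is a space curve $D$ of Davis type $[a,r]$ or $[[a,r]]$ for the appropriate $a$ and $r$; this is what lets us transport the numerology of the tables for the $C'_{d'}$ from curves up to surfaces. Concretely, for each $d'$ in the listed set I would identify the value of $e_{d'}$ (so that $d'$ sits in the band $\{\binom{t}{2}-3,\ldots,\binom{t}{2}+1\}$ for $t=e_{d'}$, or is one of the sporadic values $10,14,15,19,20,21,22$), and then apply a liaison via a complete intersection $Y$ of type $(\beta_1,\beta_2)=(e_{d'}+1,e_{d'}+1)$, or a slightly modified type in the exceptional cases, to the chosen starting surface.

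The second step is to verify that the resulting $X$ is lCM, equidimensional, and non-aCM, while having Cohen-Macaulay postulation. Non-aCM-ness follows because liaison preserves (up to dualizing and shifting) the nonvanishing of the deficiency modules: an odd number of links sends the surface with a nonzero $H^1$ to another surface with nonzero $H^1$, hence non-aCM by Remark~\ref{sezione}(b). That $X$ is lCM and equidimensional is standard for schemes linked to lCM equidimensional schemes inside a complete intersection. For the Cohen-Macaulay postulation, the point is that the general hyperplane section $C:=X_1$ is precisely (or is numerically equal to) the curve $C_{d'}$ constructed in the table above, which by the discussion preceding the statement has Cohen-Macaulay postulation; one then lifts this to $X$ by the Hilbert-function bookkeeping of Remark~\ref{sezione}(a) together with Theorem~\ref{DGO}(ii) applied to the liaison of $C$ inside the complete intersection hyperplane section of $Y$, checking that $H_X$ coincides with the Hilbert function of an aCM surface of the same degree. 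Finally $deg(Y)-deg(X)=d'$ by the definition of liaison: $deg(Y)=\beta_1\beta_2=(e_{d'}+1)^2$ and one subtracts $deg(Z')$, or rather $deg(X') = \beta_1\beta_2 - deg(X)$, giving exactly $d'$ after the arithmetic $d' = (e_{d'}+1)^2 - deg(X') = \ldots$; this reduces to the already-verified identity $deg(Y)-deg(C_{d'})=d'$ for the curve sections.

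I would organize the argument as a case analysis on $e_{d'}-f_{d'}\in\{1,2,3,\geq 4\}$ for the generic band values, handling $d'=10,14,15,19,20,21,22$ (which correspond to the "slightly different" curve constructions mentioned before the statement, e.g. $d'=7,8,12$ lifted by the liaison, or small bands where the table entries degenerate) separately, and in each case just exhibiting the starting surface, the linking complete intersection, and the resulting numerical $h$-vector. The main obstacle I anticipate is not any single hard computation but rather bookkeeping: one must confirm that a \emph{surface} with the prescribed cohomology and general hyperplane section of Davis type actually exists in $\mathbb P^4_K$ (the Veronese surface and its successive liaisons supply this, but one needs the deficiency module to remain one-dimensional and $H^2$ to stay zero through the links, so that the hyperplane-section curve keeps the exact Davis numerology), and that the $h$-vector of $X$ one writes down is genuinely the $h$-vector of some aCM scheme — i.e. is admissible — which must be checked band by band using the characterization in Theorem~\ref{Valla}. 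The sporadic values are exactly the ones where this admissibility check, or the existence of the requisite surface, is tightest, which is why they are singled out.
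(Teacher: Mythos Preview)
Your overall strategy---start from the Veronese surface $V$, apply the same sequence of liaisons that Davis uses at the curve level, and read off Cohen--Macaulay postulation from the hyperplane section $C_{d'}$---is indeed the paper's approach. However, you have the key cohomological mechanism backwards, and this breaks the argument.

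For a surface in $\mathbb P^4_K$ there are two deficiency modules, $M^1_X$ and $M^2_X$, and the Hartshorne--Schenzel theorem \emph{swaps} them (up to dual and shift) under a single liaison. Starting from $V$, where $M^1_V$ is one-dimensional and $M^2_V=0$, an \emph{odd} number of liaisons produces a surface $X$ with $M^1_X=0$ and $M^2_X\neq 0$, not the other way around. The non-aCM-ness of $X$ therefore comes from $h^2(\mathcal I_X)\neq 0$, not from $h^1$. More importantly, the vanishing $h^1(\mathcal I_X(t))=0$ for all $t$ is exactly what forces $\Delta H_X(t)=H_{C_{d'}}(t)$ on the nose (via the long exact sequence \eqref{Castelnuovo}), and \emph{this} is how Cohen--Macaulay postulation is lifted from $C_{d'}$ to $X$. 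Your proposed route through Remark~\ref{sezione}(a) and Theorem~\ref{DGO}(ii) cannot do this: Remark~\ref{sezione}(a) only gives the inequality $\Delta H_X\geq H_C$, and Theorem~\ref{DGO} applies to aCM schemes, which $X$ is not.

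A second, smaller gap: you assert that the hyperplane section of the surface you build is a Davis curve, but you never say why the liaison sequence applied to $V$ produces surfaces whose sections are \emph{exactly} the Davis curves. The paper's crucial observation here is that the general hyperplane section $\mathcal C$ of $V$ is itself linked to two skew lines by a $(2,3)$ liaison---so it sits precisely at the first step of Davis's recursive construction. Because linkage commutes with hyperplane sections, the parallel liaisons on $V$ then track Davis's curves step by step. Without this identification the claim that ``the general hyperplane section is a Davis curve'' is unsupported. Finally, the sporadic values $10,14,15,19,20,21,22$ are not related to the exceptional curve cases $d'=7,8,12$; they are the explicit base cases from which the recursion for the bands $\binom{t}{2}-3,\ldots,\binom{t}{2}+1$ (for $t\geq 8$) is launched.
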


\begin{proof}
It is crucial for our purpose that the general hyperplane section of the Veronese surface $V$ of $\mathbb P^4_K$ is a curve $\mathcal C$ that is linked to two skew lines by a liaison of type $(2,3)$, as one can check by a computation and by \cite[Lemma-Definition, page F8]{D}. Indeed, many of the curves of types $[a,r]$ and $[[a,r]]$ are constructed from two skew lines by liaisons, the first of which generates $\mathcal C$. Thanks to the fact that linkage preserves general hyperplane sections, by applying to $V$ the same liaisons, we obtain surfaces $X'_{d'}$ whose general hyperplane sections are the curves of type $[a,r]$ or $[[a,r]]$. With a further liaison of type $(\beta_1,\beta_2)=(e_{d'}+1,e_{d'}+1)$, we obtain a surface $X$ whose general hyperplane section is the curve $C_{d'}$ of Davis, which has Cohen-Macaulay postulation. Moreover, if the number of applied liaisons is odd, by the Hartshorne-Schenzel Theorem we obtain that $h^1(\mathcal I_{X}(t))= h^2(\mathcal I_{V}!
 (t))=0$, for every $t$, so that $\Delta H_{X}(t) =H_{C_{d'}}(t)$ \cite[Remark 2.1.3]{Mi} and also $X$ has Cohen-Macaulay postulation, because $X$ shares its $h$-vector with $C_{d'}$.

First, we observe that the described strategy works in the cases considered in the following table: we apply successively liaisons of the listed types to the surface of $\mathbb P^4_K$ denoted by $S$, obtaining the above surface $X'_{d'}$ of degree $d'$; after a further liaison, we get the surface $X$ with Cohen-Macaulay postulation.
\vskip 2mm
\begin{small}
\begin{center}
\begin{tabular}{l| l l l l }
\hline
$d'$ & $S$ & liaisons & further liaison & $h$-vector of $X$ \\
\hline
$10$ & $V$ & $(3,4),(3,6)$ & $(6,6)$ & $[1, 2, 3, 4, 5, 6, 5]$ \\
$14$ & $X'_{10}$ & $(4,6),(4,7)$ & $(6,6)$ & $[1, 2, 3, 4, 5, 6, 1]$ \\
$15$ & $V$ & $(3,3),(3,4),(4,5),(4,7)$ & $(7,7)$  &  $[1, 2, 3, 4, 5, 6, 7, 6]$ \\
$19$ & $X'_{14}$ & $(5,7),(5,8)$ & $(7,7)$ & $[1, 2, 3, 4, 5, 6, 7, 2]$ \\
$20$ & $X'_{15}$ & $(5,7),(5,8)$ & $(7,7)$ & $[1, 2, 3, 4, 5, 6, 7, 1]$ \\
$21$ & $V$ & $(3,3),(3,4),(4,4),(4,5),(5,6)(5,8)$ & $(8,8)$ & $[1, 2, 3, 4, 5, 6, 7, 8, 7]$ \\
$22$ & $V$ & $(3,4),(4,5),(5,6),(5,8)$ & $(8,8)$ & $[1,2,3,4,5,6,7,8,6]$ \\
\hline
\end{tabular}
\end{center}
\end{small}
\vskip 2mm
In this construction there is a type of recursion useful to prove that our strategy works well also in the remaining cases. Indeed, for $d'\in \cup_{t\geq 8}\Bigl\{\binom{t}{2}-3, \binom{t}{2}-2, \binom{t}{2}-1\Bigr\}$, by the construction of \cite{D} it is enough to apply successively two liaisons of type $(t-2,t)$ and $(t-2,t+1)$ to $X'_{\bar d}$, where $\bar d=\binom{t-1}{2}-2,\binom{t-1}{2}-1,\binom{t-1}{2}$, respectively. With a further liaison of type $(t,t)$ we obtain the desidered surface $X$, after a total odd number of liaisons applied to $V$.

For $d'=\binom{t}{2}$, we observe that the curve $C'_{d'}$ is of type $[t-2,1]$ and is obtained from $\mathcal C$ by liaisons of type $(3,3),(3,4),\ldots,(t-3,t-3),(t-3,t-2),(t-2,t-1),(t-2,t+1)$. Starting from the Veronese surface $V$, with a further liaison of type $(t+1,t+1)$ we obtain the desidered surface $X$.

For $d'=\binom{t}{2}+1$, the curve $C'_{d'}$ is of type $[t-2,2]$ and is obtained from $\mathcal C$ by liaisons of two types: first $(3,3),(3,4),\ldots,(t-5,t-5),(t-5,t-4)$ and then $(t-4,t-3)$, $(t-3,t-2)$, $(t-2,t-1)$, $(t-2,t+1)$. Starting from the Veronese surface $V$, with a further liaison of type $(t+1,t+1)$ we obtain the desidered surface.
\end{proof}

\begin{remark}
In \cite{MN}, the authors give algorithms to construct codimension $2$ non-aCM subschemes with Cohen-Macaulay postulation. As already observed in the Introduction, the difference with the result of Proposition \ref{counterexamples} is that in \cite{MN} the authors use basic double links, here we use an odd number of liaisons.
\end{remark}

\section{Cohen-Macaulayness conditions in codimension higher than two}

With the same notation of the previous sections, $Y$ is a complete intersection of type $(\beta_1,\ldots, \beta_{n-k})$ containing properly $X$ and $reg(Y)=\sum \beta_i-c+1$, where $c=n-k$ is the codimension of $X$ and $Y$.

In this section we use the notion of Borel ideal and suppose that $ch(K)=0$. Recall that a Borel ideal is an ideal fixed under the action of the Borel subgroup of upper-triangular invertibles matrices, if $x_0<x_1<\ldots <x_n$, or under the action of the lower-triangular invertibles matrices, if $x_0>x_{1}>\ldots >x_n$. Here, we consider the latter setting. In generic coordinates, the initial ideal of an ideal I, with respect to a fixed term order $\prec$, is a constant Borel-fixed monomial ideal called the \emph{generic initial ideal} of I. We denote by $gin(I)$ the generic initial ideal of a homogeneous ideal $I$ with respect to the degree reverse lexicographic term order and we set $gin(X):=gin(I(X))$ for any subscheme $X$. For a survey on this subject we refer to \cite{Gr}. We will need the following properties of $gin(X)$.

\begin{theorem} \label{borel} {\rm (\cite[Theorem 4.2]{BS}, \cite{Gr})} If $X\subset \mathbb P^n_K$ is a closed subscheme, then $reg(X)=reg(gin(X))$; if moreover $Z$ is the general hyperplane section of $X$, then $gin(Z)={(gin(X),x_n)^{sat}}/{(x_n)}$.
\end{theorem}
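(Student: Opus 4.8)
The approach is to reduce both assertions to the combinatorics of strongly stable monomial ideals, using the standard theory of generic initial ideals for the degree reverse lexicographic order $\prec$ (see \cite{Gr}). Throughout, ``in generic coordinates'' means after a general $g\in GL_{n+1}(K)$, for which Galligo's theorem guarantees that $\mathrm{in}_\prec(g\cdot I)$ is constant, equal to $gin(I)$, and --- since $ch(K)=0$ --- strongly stable. Two inputs do the work. First, for a strongly stable ideal $J$, $reg(J)$ equals the top degree of a minimal generator of $J$ (Eliahou--Kervaire). Second, the special behaviour of revlex with respect to the bottom variables: in generic coordinates, for every $0\le j\le n$,
\[
\mathrm{in}_\prec\bigl(I+(x_n,\dots,x_{n-j+1})\bigr)=gin(I)+(x_n,\dots,x_{n-j+1}),
\]
and $\mathrm{in}_\prec$ commutes with taking the colon against $x_{n-j}$, hence with $x_{n-j}$-saturation.

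For the equality $reg(X)=reg(gin(X))$, the inequality $\ge$ follows at once from the upper semicontinuity of the graded Betti numbers along the flat Gröbner degeneration of $S/I(X)$ to $S/gin(X)$. The reverse inequality is the substantive point. Put $m:=reg(X)$. By the Bayer--Stillman $m$-regularity criterion \cite[Theorem 4.2]{BS}, the $m$-regularity of $I(X)$ is equivalent, in generic coordinates, to a finite list of equalities in degrees $\ge m$ between ideals of the form $(I(X),x_n,\dots,x_{n-j+1}):x_{n-j}$ and $(I(X),x_n,\dots,x_{n-j+1})$, together with $(I(X),x_n,\dots,x_{n-p})_m=S_m$. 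The transfer facts above show that each ideal occurring here has the same Hilbert function as the one obtained by replacing $I(X)$ with $gin(X)$; hence the same equalities hold for $gin(X)$, so $gin(X)$ is $m$-regular and $reg(gin(X))\le m$.

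For the hyperplane-section formula, I may assume $I(X)$ is already in generic coordinates, so that $\mathrm{in}_\prec(I(X))=gin(X)=:J$ and $H=\{x_n=0\}$ is a general hyperplane; writing $R:=K[x_0,\dots,x_{n-1}]$, we have $I(Z)=\bigl((I(X)+(x_n))/(x_n)\bigr)^{sat}$. The $j=1$ transfer fact gives $\mathrm{in}_\prec(I(X)+(x_n))=J+(x_n)$, and reducing modulo $(x_n)$ yields $\mathrm{in}_\prec\bigl((I(X)+(x_n))/(x_n)\bigr)=(J,x_n)/(x_n)=:\bar J$ in $R$. Since revlex saturation in generic coordinates commutes with $\mathrm{in}_\prec$, this gives $\mathrm{in}_\prec(I(Z))=\bar J^{\,sat}=(gin(X),x_n)^{sat}/(x_n)$; and a general hyperplane section of an ideal in generic coordinates is again in generic coordinates, so $\mathrm{in}_\prec(I(Z))=gin(Z)$, as claimed.

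The hard part is the reverse regularity inequality, i.e.\ proving the Bayer--Stillman criterion and the two revlex transfer facts: the former rests on a cohomological analysis of $m$-regularity, the latter on the fact that in generic coordinates $x_n$ behaves as a generic linear form while revlex both ``avoids'' and ``saturates against'' it. Once these are available, both halves of the statement reduce to bookkeeping with Hilbert functions and with the strongly stable structure of $gin$, which is where the characteristic-zero hypothesis enters.
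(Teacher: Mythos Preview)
The paper does not prove this theorem; it is quoted from Bayer--Stillman and Green without argument. Your outline is the standard route from those sources, and the hyperplane-section half is correct as sketched.

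There is, however, a direction error in the regularity half. Along the flat Gr\"obner degeneration from $S/I(X)$ to $S/gin(X)$, upper semicontinuity says graded Betti numbers can only \emph{increase} at the special fibre; hence it yields $reg(gin(X))\ge reg(X)$, i.e.\ the inequality $reg(X)\le reg(gin(X))$, not the one you claim. Your Bayer--Stillman paragraph then proves exactly $reg(gin(X))\le m=reg(X)$, which is the genuinely nontrivial direction. So both ingredients are present and correct, but you have swapped which one is ``easy'' and which is ``substantive''; as written, you invoke upper semicontinuity for the wrong inequality and then prove that same inequality a second time by Bayer--Stillman, while never explicitly covering $reg(X)\le reg(gin(X))$. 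Swap the labels and the argument is complete.
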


\begin{theorem}\label{c>2}
Let $X\subset \mathbb P^n_K$ ($n\geq 4$) be a lCM equidimensional subscheme of codimension $c=n-k\geq 3$ contained in a complete intersection $Y$ with $deg(Y)-deg(X)\leq 3$. Then $X$ is aCM if and only if $X$ has positive $h$-vector.
\end{theorem}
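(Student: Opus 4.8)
The ``only if'' implication follows at once from Theorem \ref{Valla}, since an admissible sequence of positive integers is positive. So assume that $X$ has positive $h$-vector; I must show that $X$ is aCM. If $X$, equivalently $Y$, is degenerate I would replace $\mathbb P^n_K$ by a hyperplane: the codimension drops to $c-1$, and I conclude by induction on $n$ or, when $c=3$, by Theorem \ref{thmcurve} or Theorem \ref{d'=5} (note that $\deg Y-\deg X\le 3\le 5$). So I may assume every $\beta_i\ge 2$. As in the proof of Theorem \ref{d'=5}, I pass to general hyperplane sections: let $Z=X_0$, and let $Z'$ be the $0$-dimensional scheme linked to $Z$ by the $0$-dimensional complete intersection $Y_0$ obtained from $Y$. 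Since degree is preserved by hyperplane section, $\deg Z'=\deg Y_0-\deg Z=\deg Y-\deg X\le 3$; and since linkage is preserved by hypersurface section, $Z'$ is also the general hyperplane section of the curve $C'$ algebraically linked to $C:=X_1$ by a complete intersection curve, with $\deg C'=\deg Z'$. By Theorem \ref{DGO} applied to $Z\subset Y_0$, the sequence $\Delta H_Z$ agrees with the $h$-vector $\Delta^{k+1}H_Y$ of the codimension-$c$ complete intersection $Y$ (a symmetric sequence $1,c,\dots,c,1$ whose last $1$ sits in degree $reg(Y)-1=\sum\beta_i-c$) except in its top two or three entries, where it is lowered by $\Delta H_{Z'}$ read backwards from $reg(Y)-1$. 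Being the $h$-vector of a $0$-dimensional scheme of degree at most $3$, $\Delta H_{Z'}$ is one of $(1),(1,1),(1,1,1),(1,2)$, yielding four explicit shapes for $\Delta H_Z$.

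In each of these four cases I would argue, by the positivity argument used in the proof of Theorem \ref{d'=5} (Lemma \ref{tk} together with $\sum_t\Delta^{k+1}H_X(t)=\deg X$ and the ``once $0$ stays $0$'' property), that positivity of the $h$-vector either forces $\Delta^{k+1}H_X=\Delta H_Z$ outright --- whence $X$ is aCM by Remark \ref{sezione}(b) --- or reduces the problem to showing that the linked curve $C'$ is aCM, which in turn (linkage by a complete intersection preserves aCM-ness, and $X$ is aCM iff $C=X_1$ is aCM) finishes the proof. Here one exploits that $C'$ has degree at most $3$ while $n\ge 4$, so $C'$ is degenerate and, after fixing coordinates, is a pure one-dimensional subscheme of a linear $\mathbb P^m$ with $m\le 3$: a plane curve, a twisted cubic, or a union of lines (possibly non-reduced) and a conic. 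All such $C'$ are aCM except for a short explicit list of configurations (a disjoint pair of lines, a set of pairwise-skew lines, a conic together with a disjoint line, and a few non-reduced analogues); for each of these I would use the explicit $\Delta H_Z$ above and the liaison exact sequence relating $H_C$ to $H_{C'}$ to compute the $h$-vector of $C$, hence of $X$, and check that it is \emph{not} positive --- a contradiction. This is the mechanism behind Proposition \ref{counterexamples}: the non-aCM phenomena only surface when $\deg Y-\deg X$ is large.

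Rather than run these computations by hand, and to verify that no other positive $h$-vector can occur, one proceeds uniformly with generic initial ideals. Positivity together with $\deg Y-\deg X\le 3$ forces the $h$-vector of $X$ to be short (a long run $1,2,3,\dots$ in $\Delta^{k+1}H_X$ would make $\alpha(X)$, and hence $\prod\beta_i=\deg Y$, too large relative to $\deg X$), so only finitely many Hilbert functions $H_X$ and finitely many complete intersection types $(\beta_1,\dots,\beta_c)$ arise. For each of them I would list all saturated Borel-fixed monomial ideals with the corresponding Hilbert polynomial by the algorithm of \cite{CLMR,L} implemented in Lella's applet; by Theorem \ref{borel}, $\gin(X)$ is one of these, with the same Hilbert function and regularity as $X$, and --- since revlex generic initial ideals in generic coordinates preserve depth (see \cite{Gr}) --- $X$ is aCM if and only if $\gin(X)$ is Cohen--Macaulay, a property one reads off its minimal generators. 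Checking that on each of these finitely many lists every Borel ideal whose first difference of Hilbert function is a positive sequence is Cohen--Macaulay then completes the proof. I expect the main obstacle to be exactly this bookkeeping: pinning down precisely which types $(\beta_1,\dots,\beta_c)$ and which positive $h$-vectors are compatible with $\deg Y-\deg X\le 3$, so that the Borel-ideal verification is genuinely exhaustive, and dealing cleanly with the handful of low-degree non-aCM configurations of $C'$ that slip past the positivity test applied at the level of $Z$.
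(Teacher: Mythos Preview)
Your first paragraph is fine and matches the paper: the three cases $\Delta H_{Z'}\in\{(1),(1,1),(1,2)\}$ are disposed of exactly by the positivity/Lemma~\ref{tk}/degree-count argument you describe, and the remaining work is the case $\Delta H_{Z'}=(1,1,1)$.

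Your treatment of that case, however, has a genuine gap. In the third paragraph you propose to enumerate Borel ideals with the Hilbert polynomial of $X$ itself, claiming that ``positivity together with $\deg Y-\deg X\le 3$ forces the $h$-vector of $X$ to be short\ldots so only finitely many Hilbert functions $H_X$ and finitely many complete intersection types $(\beta_1,\dots,\beta_c)$ arise.'' This is false: the $\beta_i$ are unconstrained (beyond $\beta_i\ge 2$), so $\deg Y=\prod\beta_i$ and $\deg X=\deg Y-d'$ are unbounded, and the $h$-vector of $X$ --- which agrees with that of $Y$ up to degree $reg(Y)-3$ --- has length $\sum\beta_i-c$, also unbounded. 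There is no finite list of Borel ideals for $X$ to check.

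The paper's manoeuvre is to apply the generic-initial-ideal analysis to the \emph{linked curve} $C'$ rather than to $X$. Since $\deg C'\le 3$, there are only finitely many possible $\operatorname{gin}(C')$, and these are listed via Lella's applet once the Hilbert polynomial of $C'$ is known. To pin down that polynomial the paper first tabulates, for each $c$, the few shapes that a \emph{positive} $\Delta^{k+1}H_X$ can take in degrees $\ge reg(Y)-3$ when it differs from $\Delta H_Z$; for each shape Lemma~\ref{Lemma} yields the arithmetic genus $g'$ of $C'$. One then uses that a lCM curve of degree $3$ has $g'\le 1$ (with equality only for a plane cubic) to kill all shapes giving $g'>1$, and for $g'\in\{0,1\}$ one runs through the handful of candidate Borel ideals, eliminating each by Theorem~\ref{borel} (compatibility with $\operatorname{gin}(Z')=(x_0,\dots,x_{c-2},x_{c-1}^3)$), by the genus, or by \cite[Theorem~2.1]{H94}. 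Your second paragraph gestures at a classification of low-degree $C'$, but without computing $g'$ from the positivity hypothesis you have no control over which configurations can actually occur, and the ``compute the $h$-vector of $X$ from $C'$'' step is not well-posed: $\Delta^{k+1}H_X$ is not determined by $C'$ and the liaison alone.
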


\begin{proof}
We will follow the same approach of the proof of Theorem 2.3.

If $\Delta H_{Z'}$ is either $1 \ 0$ or $1 \ 1 \ 0$ or $1 \ 2 \ 0$, then $reg(Y)-2 \leq t_k$, by the definition of $t_k$ in Lemma \ref{tk} and by Theorem \ref{DGO}. So, we have $\Delta H_Z(reg(Y)-2)\leq \Delta^{k+1} H_X(reg(Y)-2)\leq \Delta^{k+1} H_Y(reg(Y)-2)$ and the thesis follows.

It remains to analyze the case $\Delta H_{Z'}: \ 1 \ 1 \ 1$, in which $reg(Y)-3 \leq t_k$. Recall that the arithmetic genus of a lCM curve of degree $3$ is $\leq 1$ and the equality holds if and only if the curve is planar (e.g. \cite{H94}).

Let $c=3$ and $X$ be non-aCM. Then, by Theorem \ref{DGO} and by degree arguments, i.e. $\sum_{t\geq 0} \Delta^{k+1}H_X(t)=deg(X)=deg(Y)-3$, we obtain the following situation:
\vskip 2mm
\begin{small}
\begin{center}
\begin{tabular}{c| c c c c c c }
\hline
$t$ & $0$ & $\ldots$ & $reg(Y)-3$ & $reg(Y)-2$ & $reg(Y)-1$ & $reg(Y)$ \\
\hline
$\Delta^{k+1} H_Y$ & $1$ & $\ldots$ & $a_{reg(Y)-3}$ & $3$ & $1$ & $0$ \\
$\Delta^{k+1} H_X$ & $1$ & $\ldots$ & $a_{reg(Y)-3}$ & $1$ & $0$ & $0$ \\
$\Delta H_Z$       & $1$ & $\ldots$ & $a_{reg(Y)-3}-1$ & $2$ & $0$ & $0$ \\
\hline
\end{tabular}
\end{center}
\end{small}
\vskip 2mm
and $\Delta^{k+1} H_X(t) = \Delta^{k+1} H_Y(t)=0$, for every $t\geq reg(Y)=\sum \beta_i-c+1$.
Applying Lemma \ref{Lemma} with $t=reg(Y)-2=\sum \beta_i-c-1$ and computing $D_{t}=\Delta^{k-1} P_Y(t)-P_C(t)=2$, we obtain $g'=0$, hence $P_{C'}(t)=3t+1$. Using the already cited applet \textsc{BorelGenerator} of P. Lella, $gin(C')$ can be one of the following Borel ideals:
$$J_1 = (x_0,x_1,x_2^4,x_2^3x_3), \quad J_2 = (x_0,x_1^2,x_1x_2,x_1x_3,x_2^3), \quad J_3=(x_0,x_1^2,x_1x_2,x_2^2).$$
Since $Z'$ is a planar scheme, then $gin(Z')$ is univocally determined by its $h$-vector; so,  $gin(Z')=(x_0,x_1,x_2^3)$ and by Theorem \ref{borel} we exclude $J_3$. We exclude also $J_1$ because in that case $g'=1\not=0$, as $C'$ is a lCM planar curve of degree $3$.
Finally, we exclude $J_2$, because in that case $C'$ should be a non-degenerate space curve of degree $3$, meanwhile $Z'$ is degenerate in $\mathbb P^2_K$, contrary to \cite[Theorem 2.1]{H94}. Hence, we obtain the thesis for $c=3$.

Let $c\geq 4$ and $X$ be non-aCM. Then, we obtain several possible situations. The first one generalizes the case $c=3$ and is
\vskip 2mm
\begin{small}
\begin{center}
\begin{tabular}{c| c c c c c c }
\hline
$t$ & $0$ & $\ldots$ & $reg(Y)-3$ & $reg(Y)-2$ & $reg(Y)-1$ & $reg(Y)$ \\
\hline
$\Delta^{k+1} H_Y$ & $1$ & $\ldots$ & $a_{reg(Y)-3}$ & $c$ & $1$ & $0$ \\
$\Delta^{k+1} H_X$ & $1$ & $\ldots$ & $a_{reg(Y)-3}$ & $c-2$ & $0$ & $0$ \\
$\Delta H_Z$       & $1$ & $\ldots$ & $a_{reg(Y)-3}-1$ & $c-1$ & $0$ & $0$ \\
\hline
\end{tabular}
\end{center}
\end{small}
\vskip 2mm
the second one is
\begin{small}
\begin{center}
\begin{tabular}{c| c c c c c c }
\hline
$t$ & $0$ & $\ldots$ & $reg(Y)-3$ & $reg(Y)-2$ & $reg(Y)-1$ & $reg(Y)$ \\
\hline
$\Delta^{k+1} H_Y$ & $1$ & $\ldots$ & $a_{reg(Y)-3}$ & $c$ & $1$ & $0$ \\
$\Delta^{k+1} H_X$ & $1$ & $\ldots$ & $a_{reg(Y)-3}$ & $c-3$ & $1$ & $0$ \\
$\Delta H_Z$       & $1$ & $\ldots$ & $a_{reg(Y)-3}-1$ & $c-1$ & $0$ & $0$ \\
\hline
\end{tabular}
\end{center}
\end{small}
\vskip 2mm
where $\Delta^{k+1} H_X(t) = \Delta^{k+1} H_Y(t)=0$, for every $t\geq reg(Y)=\sum \beta_i-c+1$; only for $c>4$, there are other possible cases, in which we have always $\Delta^{k+1} H_X(reg(Y)-2)<c-3$.

In the first case, as for $c=3$, we obtain $g'=0$; in the second case, for $t=reg(Y)-2$, we compute $D_{t}=\Delta^{k-1} P_Y(t)-P_C(t)=3$ and obtain $g'=1$.
So, we have either $P_{C'}(t)=3t+1$ or $P_{C'}(t)=3t$. In the first case, as for $c=3$, we obtain for $C'$ the possible generic initial ideals \hskip 2mm $J'_1 = (x_0, \ldots,x_{n-3},x_{n-2}^4,x_{n-2}^3x_{n-1})$, \hskip 2mm
$J'_2 = (x_0,\ldots,x_{n-4},x_{n-3}^2,x_{n-3}x_{n-2},x_{n-3}x_{n-1},x_{n-2}^3)$ \hskip 2mm and \hskip 2mm $J'_3=(x_0,\ldots,x_{n-4},x_{n-3}^2,$ $x_{n-3}x_{n-2},x_{n-2}^2)$, that we exclude with the same arguments as before. In the second case, we get $gin(C')=(x_0,\ldots,x_{n-3},x_{n-2}^3)$ and $C'$ would be aCM.

In the other cases, applying Lemma \ref{Lemma} with $t=\rho_C\geq reg(Y)-2$, we obtain $g'>1$, that is absurd.
\end{proof}

\begin{example}
Just to give an example of the situations that can occur for $c>4$ in the proof of Theorem \ref{c>2}, we consider the following case
\begin{small}
\begin{center}
\begin{tabular}{c| c c c c c c }
\hline
$t$ & $0$ & $\ldots$ & $reg(Y)-3$ & $reg(Y)-2$ & $reg(Y)-1$ & $reg(Y)$ \\
\hline
$\Delta^{k+1} H_Y$ & $1$ & $\ldots$ & $a_{reg(Y)-3}$ & $c$ & $1$ & $0$ \\
$\Delta^{k+1} H_X$ & $1$ & $\ldots$ & $a_{reg(Y)-3}$ & $c-4$ & $1$ & $1$ \\
$\Delta H_Z$       & $1$ & $\ldots$ & $a_{reg(Y)-3}-1$ & $c-1$ & $0$ & $0$ \\
\hline
\end{tabular}
\end{center}
\end{small}
\vskip 2mm
For $t=reg(Y)-1$, we obtain $D_{t}=\Delta^{k-1} P_Y(t)-P_C(t)=8$ and $g'=3$.
\end{example}

\begin{proposition} \label{prop:c3}
The statement of Theorem \ref{c>2} holds also with $deg(Y)-deg(X)\geq 4$ if $H_{Z'}(t)=\min\{d',\binom{t+c}{c}\}$, i.e. $Z'$ has maximal rank in $\mathbb P^c_K$.
\end{proposition}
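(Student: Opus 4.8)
The plan is to adapt the proof of Theorem \ref{c>2}, using the maximal rank of $Z'$ to keep the numerics under control for arbitrarily large $d':=deg(Y)-deg(X)$. The implication ``$X$ aCM $\Rightarrow$ positive $h$-vector'' is Theorem \ref{Valla}, so I concentrate on the converse. Assuming the $h$-vector $\Delta^{k+1}H_X$ of $X$ is positive, it suffices to show that $\Delta^{k+1}H_X(t)=\Delta H_Z(t)$ for every $t$, whence $X$ is aCM by Remark \ref{sezione}(b).

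First I would locate the degrees in which $\Delta H_Z$ differs from the $h$-vector $\Delta^{k+1}H_Y$ of $Y$. By Theorem \ref{DGO}(ii) applied to the $0$-dimensional complete intersection $Y_0\subset\mathbb P^c_K$ that links $Z$ and $Z'$, one has $\Delta^{k+1}H_Y(t)-\Delta H_Z(t)=\Delta H_{Z'}(reg(Y)-1-t)$ for $0\le t\le reg(Y)-1$, hence $t_k=reg(Y)-1-\rho_{Z'}$ with the notation of Lemma \ref{tk}. The maximal rank of $Z'$ forces $\Delta H_{Z'}$ to be the truncated binomial sequence, equal to $\binom{s+c-1}{c-1}$ in degree $s<\rho_{Z'}$ and to $d'-\binom{\rho_{Z'}+c-1}{c}$ in degree $\rho_{Z'}$. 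Comparing this with the symmetric unimodal $h$-vector of the complete intersection $Y$, which equals $\binom{s+c-1}{c-1}$ in degree $reg(Y)-1-s$ as long as $s<\beta_1$, the inequality $\Delta H_{Z'}(s)\le\Delta^{k+1}H_Y(reg(Y)-1-s)$ supplied by Theorem \ref{DGO}(ii) forces $\rho_{Z'}\le\beta_1$, and then $\Delta H_Z(t)=\Delta^{k+1}H_Y(t)$ for $t<t_k$ while $\Delta H_Z(t)=0$ for $t>t_k$; in particular $\Delta H_Z$ is completely determined by $d'$ and the $\beta_i$'s.

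Next, as in the proof of Theorem \ref{d'=5}, Lemma \ref{tk} and the inclusions $X_{k-i}\subseteq Y_{k-i}$ give $\Delta^{k+1}H_X(t)=\Delta^{k+1}H_Y(t)=\Delta H_Z(t)$ for $t<t_k$, so the degree count $\sum_t\Delta^{k+1}H_X(t)=deg(X)=deg(Z)=\sum_t\Delta H_Z(t)$ forces $t_0=t_k$ and $\sum_{t\ge t_k}\Delta^{k+1}H_X(t)=\Delta H_Z(t_k)$. Writing $\delta:=H_Y(t_k)-H_X(t_k)\ge 1$ one gets $\Delta^{k+1}H_X(t_k)=\Delta^{k+1}H_Y(t_k)-\delta$, hence $\delta\ge\Delta H_{Z'}(\rho_{Z'})$, with equality precisely when $\Delta^{k+1}H_X(t)=0$ for $t>t_k$, that is, when $\Delta^{k+1}H_X=\Delta H_Z$. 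Since the positivity of the $h$-vector means that $\Delta^{k+1}H_X$ has no internal zeros, the case $\delta>\Delta H_{Z'}(\rho_{Z'})$ leaves only finitely many candidate ``overshooting'' positive $h$-vectors, and I would discard each of them as in Theorem \ref{c>2}: when $X$ is not aCM the curve $C=X_1$ is linked by $Y_1$ to a curve $C'\subset\mathbb P^{c+1}_K$ of degree $d'$ with general hyperplane section $Z'$, and one computes its arithmetic genus $g'$ through Lemma \ref{Lemma} (with $t=\max\{\rho_C,reg(Y)-2\}$). This value strictly exceeds the one attached to the aCM case, and is excluded either because it is too large for a curve whose general hyperplane section has the fixed $h$-vector $\Delta H_{Z'}$, or --- when $Z'$ is degenerate in $\mathbb P^c_K$, which by maximal rank occurs only for small $d'$ since $H_{Z'}(1)=\min\{d',c+1\}$ --- because $C'$ would then lie in a $\mathbb P^3_K$ where, being non-degenerate of degree $\ge 3$, it cannot have a degenerate general plane section (\cite[Theorem 2.1]{H94}), or, in the borderline cases $g'\in\{0,1\}$, by listing the admissible generic initial ideals of $C'$ with the applet \textsc{BorelGenerator} and ruling them out through Theorem \ref{borel}, the maximal rank of $Z'$ making $gin(Z')$ unique.

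The step I expect to be the main obstacle is this last one: unlike the codimension-two situation of Proposition \ref{varie}, for $c\ge 3$ the degree count and the positivity alone do not close the argument (the inequality $\delta\ge\Delta H_{Z'}(\rho_{Z'})$ points the wrong way), so one really must control the residual curve $C'$. The delicate part is to show, uniformly in $d'$ and in $c\ge 3$, that every ``overshooting'' $h$-vector produces a value of $g'$ incompatible with $Z'$ being a general hyperplane section of $C'$ of maximal rank; for $c=3$ this rests on the structure theory of space curves with small deficiency modules (\cite{H94}, \cite{CGN02}, \cite{CGN07}), whereas for $c\ge 4$ I would set up an induction on the codimension, passing to the general hyperplane section of $C'$ and invoking the codimension-$(c-1)$ statement. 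Verifying that $\rho_{Z'}\le\beta_1$ and that the list of ``overshooting'' $h$-vectors is finite is a secondary point, following from the shape of the $h$-vector of a complete intersection (cf.~\eqref{DeltaY}) together with Theorem \ref{DGO}.
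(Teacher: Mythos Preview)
The paper gives no explicit proof of Proposition~\ref{prop:c3}; it is stated right after Theorem~\ref{c>2} as following from the same circle of ideas (just as Proposition~\ref{varie} is declared to follow ``by the same arguments'' from Theorem~\ref{d'=5}). Your first two paragraphs---computing $t_k=reg(Y)-1-\rho_{Z'}$, using maximal rank to see that $\Delta H_Z(t)=0$ for $t>t_k$ and $\Delta H_Z(t)=\Delta^{k+1}H_Y(t)$ for $t<t_k$, and then forcing $t_0=t_k$ via the degree count and positivity---match this intended line of argument precisely.

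The gap is in your handling of the ``overshooting'' case $\delta>\Delta H_{Z'}(\rho_{Z'})$. Several of the exclusions you propose do not work as stated. First, you invoke for $c=3$ ``the structure theory of space curves'' via \cite{H94}, \cite{CGN02}, \cite{CGN07}; but for $c\ge 3$ the residual curve $C'$ lives in $\mathbb P^{c+1}_K$, not in $\mathbb P^3_K$, and those references (extremal curves, etc.) are specific to $\mathbb P^3_K$. Second, you claim that when $Z'$ is degenerate ``$C'$ would then lie in a $\mathbb P^3_K$''; Hartshorne's theorem only tells you $C'$ is degenerate in $\mathbb P^{c+1}_K$, i.e.\ lies in some $\mathbb P^c_K$, not that it drops to $\mathbb P^3_K$. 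Third, your proposed induction ``passing to the general hyperplane section of $C'$ and invoking the codimension-$(c-1)$ statement'' is incoherent: the general hyperplane section of the curve $C'$ is the $0$-dimensional scheme $Z'$, not a curve of lower codimension, and cutting $X$ or $X'$ by a hyperplane does not change the codimension either. In the paper's explicit case $d'\le 3$ the overshooting $h$-vectors are discarded by a finite enumeration of Borel ideals together with the elementary genus bound $g'\le 1$ for lCM curves of degree~$3$; for arbitrary $d'$ with $Z'$ of maximal rank a uniform replacement for that step is needed, and your sketch does not supply one.
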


\begin{remark} \label{rm:curve}
In \cite[Proposition 4.6]{CDG} we prove that, in a projective space $\mathbb P_K^n$ of dimension $n\geq 4$ over a field $K$ of every characteristic, a curve $C \subset \mathbb P_K^n$ with $deg(Y)-deg(C)\leq 3$ is aCM if and only if $reg(C) > \rho_C+1$. Moreover, we show that this result is sharp.
\end{remark}

\bibliographystyle{amsplain}
\bibliography{pscci}

\providecommand{\bysame}{\leavevmode\hbox to3em{\hrulefill}\thinspace}
\providecommand{\MR}{\relax\ifhmode\unskip\space\fi MR }
\providecommand{\MRhref}[2]{%
  \href{http://www.ams.org/mathscinet-getitem?mr=#1}{#2}
}
\providecommand{\href}[2]{#2}
\begin{thebibliography}{10}

\bibitem{BS}
David Bayer and Michael Stillman, \emph{A criterion for detecting
  {$m$}-regularity}, Invent. Math. \textbf{87} (1987), no.~1, 1--11.

\bibitem{BMR}
Giorgio Bolondi and Rosa~M. Mir{\'o}-Roig, \emph{Two-codimensional {B}uchsbaum
  subschemes of {${\bf P}^n$} via their hyperplane sections}, Comm. Algebra
  \textbf{17} (1989), no.~8, 1989--2016.

\bibitem{CGN02}
Nadia Chiarli, Silvio Greco, and Uwe Nagel, \emph{Surfaces in {$\Bbb P^4$} with
  extremal general hyperplane section}, J. Algebra \textbf{257} (2002), no.~1,
  65--87.

\bibitem{CGN07}
\bysame, \emph{Families of space curves with large cohomology}, J. Algebra
  \textbf{307} (2007), no.~2, 704--726.

\bibitem{CMR}
F.~Cioffi, M.~G. Marinari, and L.~Ramella, \emph{Regularity bounds by minimal
  generators and {H}ilbert function}, Collect. Math. \textbf{60} (2009), no.~1,
  89--100.

\bibitem{CDG}
Francesca Cioffi and Roberta Di~Gennaro, \emph{Liaison and
  {C}ohen-{M}acaulayness conditions}, Collect. Math. \textbf{62} (2011), no.~2,
  173--186.

\bibitem{CLMR}
Francesca Cioffi, Paolo Lella, Maria~Grazia Marinari, and Margherita Roggero,
  \emph{Segments and {H}ilbert schemes of points}, Discrete Mathematics
  \textbf{311} (2011), 2238--2252.

\bibitem{DGO}
E.~D. Davis, A.~V. Geramita, and F.~Orecchia, \emph{Gorenstein algebras and the
  {C}ayley-{B}acharach theorem}, Proc. Amer. Math. Soc. \textbf{93} (1985),
  no.~4, 593--597.

\bibitem{D}
Edward~D. Davis, \emph{Curves which are close to complete intersections}, The
  {C}urves {S}eminar at {Q}ueen's, {V}ol.\ {VII} ({K}ingston, {ON}, 1990),
  Queen's Papers in Pure and Appl. Math., vol.~85, Queen's Univ., Kingston, ON,
  1990, pp.~Exp.\ No.\ F, 14.

\bibitem{DGM}
Edward~D. Davis, Anthony~V. Geramita, and Paolo Maroscia, \emph{Perfect
  homogeneous ideals: {D}ubreil's theorems revisited}, Bull. Sci. Math. (2)
  \textbf{108} (1984), no.~2, 143--185.

\bibitem{dQR05}
Victoria~E. de~Quehen and Leslie~G. Roberts, \emph{Non-{C}ohen-{M}acaulay
  projective monomial curves with positive {$h$}-vector}, Canad. Math. Bull.
  \textbf{48} (2005), no.~2, 203--210.

\bibitem{E95}
Ph. Ellia, \emph{On the cohomology of projective space curves}, Boll. Un. Mat.
  Ital. A (7) \textbf{9} (1995), no.~3, 593--607.

\bibitem{EP90}
Ph. Ellia and Ch. Peskine, \emph{Groupes de points de {${\bold P}^2$}:
  caract\`ere et position uniforme}, Algebraic geometry ({L}'{A}quila, 1988),
  Lecture Notes in Math., vol. 1417, Springer, Berlin, 1990, pp.~111--116.

\bibitem{GMR}
A.~V. Geramita, P.~Maroscia, and L.~G. Roberts, \emph{The {H}ilbert function of
  a reduced {$k$}-algebra}, J. London Math. Soc. (2) \textbf{28} (1983), no.~3,
  443--452.

\bibitem{GPSY}
A.~Goodarzi, M.~R. Pournaki, S.~A. Seyed~Fakhari, and S.~Yassemi, \emph{On the
  {$h$}-vector of a simplicial complex with {S}erre's condition}, J. Pure Appl.
  Algebra \textbf{216} (2012), no.~1, 91--94.

\bibitem{Gr}
Mark~L. Green, \emph{Generic initial ideals}, Six lectures on commutative
  algebra, Mod. Birkh\"auser Class., Birkh\"auser Verlag, Basel, 2010,
  pp.~119--186.

\bibitem{H}
Robin Hartshorne, \emph{Algebraic geometry}, Springer-Verlag, New York, 1977,
  Graduate Texts in Mathematics, No. 52.

\bibitem{H94}
\bysame, \emph{The genus of space curves}, Ann. Univ. Ferrara Sez. VII (N.S.)
  \textbf{40} (1994), 207--223 (1996).

\bibitem{HIO}
M.~Herrmann, S.~Ikeda, and U.~Orbanz, \emph{Equimultiplicity and blowing up},
  Springer-Verlag, Berlin, 1988, An algebraic study, With an appendix by B.
  Moonen.

\bibitem{HU}
Craig Huneke and Bernd Ulrich, \emph{General hyperplane sections of algebraic
  varieties}, J. Algebraic Geom. \textbf{2} (1993), no.~3, 487--505.

\bibitem{K}
Ernst Kunz, \emph{Introduction to commutative algebra and algebraic geometry},
  Birkh\"auser Boston Inc., Boston, MA, 1985, Translated from the German by
  Michael Ackerman, With a preface by David Mumford.

\bibitem{L}
Paolo Lella, \emph{{An efficient implementation of the algorithm computing the
  Borel-fixed points of a Hilbert scheme}}, ISSAC 2012-Proceedings of the 37th
  International Symposium on Symbolic and Algebraic Computation, ACM, New York,
  2012, pp.~242--248.

\bibitem{MDP96}
Mireille Martin-Deschamps and Daniel Perrin, \emph{Le sch\'ema de {H}ilbert des
  courbes gauches localement {C}ohen-{M}acaulay n'est (presque) jamais
  r\'eduit}, Ann. Sci. \'Ecole Norm. Sup. (4) \textbf{29} (1996), no.~6,
  757--785.

\bibitem{MN}
J.~C. Migliore and U.~Nagel, \emph{Numerical macaulification}, Clay volume in
  honor of Joe Harris (to appear), Available at arXiv:1202.2275, 2012.

\bibitem{Mi}
Juan~C. Migliore, \emph{Introduction to liaison theory and deficiency modules},
  Progress in Mathematics, vol. 165, Birkh\"auser Boston Inc., Boston, MA,
  1998.

\bibitem{MoNa}
Dannis Moore and Uwe Nagel, \emph{Algorithms for strongly stable ideals},
  Available at http://arxiv.org/abs/1110.4080, 2011.

\bibitem{Ro}
Margherita Roggero, \emph{Laudal-type theorems in {${\bf P}^N$}}, Indag. Math.
  (N.S.) \textbf{14} (2003), no.~2, 249--262.

\bibitem{RTV}
Maria~Evelina Rossi, Ng{\^o}~Vi{\^e}t Trung, and Giuseppe Valla,
  \emph{Castelnuovo-{M}umford regularity and finiteness of {H}ilbert
  functions}, Commutative algebra, Lect. Notes Pure Appl. Math., vol. 244,
  Chapman \& Hall/CRC, Boca Raton, FL, 2006, pp.~193--209.

\bibitem{Strano}
Rosario Strano, \emph{Curves and their hyperplane sections}, J. Pure Appl.
  Algebra \textbf{152} (2000), no.~1-3, 337--341, Commutative algebra,
  homological algebra and representation theory (Catania/Genoa/Rome, 1998).

\bibitem{Va}
Giuseppe Valla, \emph{Problems and results on {H}ilbert functions of graded
  algebras}, Six lectures on commutative algebra ({B}ellaterra, 1996), Progr.
  Math., vol. 166, Birkh\"auser, Basel, 1998, pp.~293--344.

\end{thebibliography}

\end{document}